\definecolor{darkblue}{rgb}{0.0,0.0,0.5}
\tikzset{bullet/.style={circle,fill,inner sep=2pt}}
\newtheorem{theorem}{Theorem}[section]
\newtheorem{lemma}{Lemma}[section]
\newtheorem{remark}{Remark}[section]
\newtheorem{example}{Example}[section]
\newtheorem{problem}{Problem}[section]
\def\footnoterule{\relax%
	\kern-5pt
	\hbox to \columnwidth{\hfill\vrule width .9\columnwidth height 0.4pt\hfill}
	\kern4.6pt}
\tikzset{
	tangent/.style={ 
		decoration={
			markings,
			mark=
			at position #1
			with
			{
				\coordinate (tangent point-\pgfkeysvalueof{/pgf/decoration/mark info/sequence number}) at (0pt,0pt);
				\coordinate (tangent unit vector-\pgfkeysvalueof{/pgf/decoration/mark info/sequence number}) at (1,0pt);
				\coordinate (tangent orthogonal unit vector-\pgfkeysvalueof{/pgf/decoration/mark info/sequence number}) at (0pt,1);
				\draw[blue] (0pt,0pt) coordinate (Tang) -- (20pt,0pt);
			}
		},
		postaction=decorate
	},
	use tangent/.style={
		shift=(tangent point-#1),
		x=(tangent unit vector-#1),
		y=(tangent orthogonal unit vector-#1)
	},
	use tangent/.default=1
}
\DeclareRobustCommand{\rvdots}{%
	\vbox{
		\baselineskip4\p@\lineskiplimit\z@
		\kern-\p@
		\hbox{.}\hbox{.}\hbox{.}
}}
\begin{document}





\title{On Solving Robust Log-Optimal  Portfolio:\\ A Supporting Hyperplane Approximation Approach}
\author{Chung-Han Hsieh\\ Department of Quantitative Finance, \\National Tsing Hua University, Hsinchu 300044, Taiwan R.O.C.\\
	\href{mailto: ch.hsieh@mx.nthu.edu.tw}{ch.hsieh@mx.nthu.edu.tw}} 
\date{\vspace{-5ex}}
\maketitle



\begin{abstract}
A \textit{log-optimal} portfolio is any portfolio that maximizes the expected logarithmic growth (ELG) of an investor's wealth.
This maximization problem typically assumes that the information of the true distribution of returns is known to the trader in advance. 
However, in practice, the return distributions are indeed \textit{ambiguous}; i.e., the true distribution is unknown to the trader or it is partially known at best.
To this end, a \textit{distributional robust log-optimal portfolio problem}  formulation arises naturally. 
While the problem formulation takes into account the ambiguity on return distributions, the problem needs not to be tractable in general.
To address this, in this paper, we propose a  \textit{supporting hyperplane approximation} approach that allows us to reformulate a class of distributional robust log-optimal portfolio problems into a  linear program, which can be solved very efficiently. 
Our framework is flexible enough to allow \textit{transaction costs}, \textit{leverage and shorting}, \textit{survival trades}, and \textit{diversification considerations}.
In addition, given an acceptable approximation error, an efficient algorithm for rapidly calculating the optimal number of hyperplanes is provided.
Some empirical studies using historical stock price data are also provided to support our theory. 
\end{abstract}


{\small \textbf{keywords:} Financial Engineering, Stochastic Systems, Distributionally Robust Optimization, Portfolio Optimization,  Kelly Criterion, Robust Linear Programming, Approximation Theory. } 

\maketitle

%


\section{Introduction}\label{SECTION: Introduction}
In portfolio management, one of the key questions that most investors want to address is how to find an ``optimal" asset allocation fraction so that the desired \textit{risk-reward} objective can be achieved. 
To address this, \cite{Markowitz_1952} and \cite{markowitz1959portfolio} propose the celebrated mean-variance model in a single-period setting.
Since then, many extensions and ramifications are developed along the line of {portfolio theory} and {optimization}; e.g., see \cite{markowitz1957elimination, sharpe1963simplified,  kroll1984mean, roll1992mean,  demiguel2009generalized}.
A good survey on this topic can be found in \cite{steinbach2001markowitz}.
However, the Markowitz-style approach is \textit{static} in the sense that it only optimizes for the next rebalancing.

In contrast to the class of single-period portfolio optimization problems,~\cite{Kelly_1956} proposes 
 an alternative approach called \textit{Kelly criterion}  aimed at addressing \textit{multi-period} betting problem in a repeated gambling setting.
 The theory calls for a maximization of the expected logarithmic growth~(ELG) of a gambler's account; see also \cite{luenberger2013investment} for a good introduction to the Kelly-based approach.
  This framework is readily generalized to stock trading and portfolio optimization scenario; e.g., see \cite{latane1959criteria, rotando1992kelly, li1993growth, thorp1975portfolio, thorp2006kelly, lo2018growth}.
 It is well-known that the Kelly-based approach  guarantees the so-called  {\it comparative optimality} and {\it myopic property}; see~\cite{cover2006elements, maclean2010long, maclean2010good}. That is,  the growth rate of trader's wealth  is maximized asymptotically and the trader who adopted the Kelly-based trading strategy does not have to consider prior nor subsequent investment opportunities. 
 Additionally, the corresponding Kelly-based portfolio also minimizes the expected time to reach a prespecified target account value; e.g., see \cite{breiman1961optimal} and \cite{algoet1988asymptotic}.

 In addition, we mention a sampling of the developments along this line of research.  See \cite{cover1984algorithm} for an algorithm of solving the classical log-optimal portfolio problem,  \cite{bell1988game}  for a discussions on game-theoretic optimal portfolio, \cite{kuhn2010analysis}  for a study on continuous-time log-optimal portfolio.
 A textbook that contains many important papers on the Kelly-based approach can be found in \cite{maclean2011kelly}.
 See also  \cite{wu2018novel} and \cite{wu2022optiontrading} for studies on the application of Kelly-based approach in option trading.
 Recently, \cite{frahm2020statistical} studies the statistical properties of the estimators for the log-optimal portfolio;  \cite{lian2021optimal} studies a optimal growth in a two-sided market, and
 \cite{lototsky2021kelly} for studies on Kelly criterion with continuous L\'evy  process as a model for returns.
 

 Most of the work related to the Kelly-based approach are typically assuming that the return distributions are known in advance. 
 However, in practice, the true distribution of returns is \textit{unknown} or only partially known to the trader at best. 
 Said another way, the return distributions are indeed \textit{ambiguous} to the trader. 
 To this end, some of the work attempt to remedy this \textit{ambiguity} issue. For example,
 \cite{cover1991universal} proposes a \textit{universal portfolio} algorithm that generate an adaptive strategy from historical data.
 The resulting universal portfolio can be shown at least as well as the best log-optimal portfolio selected in hindsight. 
 However, in the short run, the  portfolios might be susceptible to error maximization.
 As another approach, \cite{rujeerapaiboon2016robust} proposes a version of robust log-optimal portfolio framework by maximizing a version of the Value at Risk of portfolio returns under a long-only framework. 
 They show that the problem is indeed a tractable \textit{semidefinite program} (SDP) and with exploiting a certain structure of the ambiguity set, it is possible to obtain a \textit{second-order cone program} (SOCP). 
 Later,  \cite{rujeerapaiboon2018risk} considers a new objective for  the Kelly betting problem using a \textit{conservative} expected value with the similar aim to mitigate the  \textit{ambiguity} issues.   
 Recently, \cite{sun2018distributional} uses a convex optimization approach for solving a class of distributional robust Kelly betting problems with the assumptions that the returns of the gambles are independent and identically distributed~(IID). 
 However, all of the work above assumes that the trades must be long-only and cash-financed.
 In contrast to the existing literature, in this paper, we propose a class of distributional robust log-optimal portfolio problems under a polyhedron ambiguity set for the return distributions.
 In addition, our formulation allows extra flexibility in the sense that  various practical trading requirements such as \textit{transaction costs}, \textit{leveraging and shorting}, \textit{survival trades}, and \textit{diversification considerations} are involved.
 Then, we propose a new hyperplane approximation approach that enables us to solve the distributional robust problem in a much fast \textit{linear program} paradigm. 

\subsection{Contributions of this Paper}
The main contributions of this paper are summarized as follows.
{\noindent
\begin{itemize} 
	\item 
	We consider a class of distributional robust log-optimal portfolio formulations with a polyhedron  ambiguous return distributions. Our formulation is flexible enough to incorporate various practical constraints such as \textit{transaction costs}, \textit{leveraging and shorting}, \textit{survival trades}, and \textit{diversification}.
	\item We provide a supporting hyperplane approximation approach and  prove that such an approximation enables us to reformulate the distributional robust log-optimal portfolio problem as a linear program.  Hence, it can be solved in a very efficient way.	 
	\item	We refine our approximation and study the optimal number of supporting hyperplanes.  An efficient algorithm for rapidly calculating the optimal number of hyperplanes is provided.
	In addition, if there is no ambiguity on the return distributions, we show that our approach attains an approximate optimum which can be arbitrarily close to the true optimum as far as performance is concerned. 
	\item In empirical studies with historical stock price data, we show that our distributional robust log-optimal portfolio is competitive with the classical log-optimal portfolio. 
	\item We also indicate one possibility to extend our formulation. That is, by involving a surrogate \textit{drawdown} risk constraint, we show that the distributional robust log-optimal portfolio problem is still a concave program. Hence, it may be solved in an efficient manner. 
\end{itemize}

\section{Problem Formulation} \label{section: Problem Formulation}
In this section, we first provide some preliminaries and then formulate a distributional robust log-optimal portfolio problem.

\subsection{Finite Outcome Case} Consider a portfolio consisting of~$n \geq 1$ risky assets. 
For  $k=0,1,2,\dots,N-1$ the~$i$th asset at stage~$k$ whose price is denoted by~$S_i(k)>0$.
The associated per-period \textit{rate of returns} for the~$i$th asset is given by
\[
X_i(k) := \frac{S_i(k+1) - S_i(k)}{S_i(k)}
\]
with $X_i(k) >-1.$\footnote{Our setting is flexible enough to involve at least one of the assets, say the $1$st asset, to be \textit{riskless} with nonnegative rate of return $X_1(k):=r_f \geq 0$. That is, if an asset is riskless, the return is assumed to be deterministic and is treated as a degenerate random variable with value $r_f$ for all $k$ with probability one.}
For each $k$, let
$$
X(k) := [X_1(k)\;\; X_2(k)\;\; \cdots\; X_n(k)]^T.
$$
The return vector~$X(k)$ is drawn according to an \textit{unknown} distribution function but are assumed to be identically distributed in $k$ and is supported on only $m$ points.\footnote{While the distribution is unknown, we assume the case where one of $m$ events occurs; i.e., $X(k)$ is supported on only $m$ points. It is worth mentioning that, in the finance literature, the identical distributed returns or even stronger cases such as IID returns are closely related to a market which is \textit{information efficient}; e.g., see~\cite{luenberger2013investment,bodie2018investments, fama2021market}.} 
The corresponding joint probability mass function for the returns is given by 
\[
P(X_1(k) = x_{1}^{j},\dots,X_n(k) = x_{n}^{j}) = p_j, \;\; j\in \{1,2,\dots,m\}
\]  
with $p_j \geq 0$ and $\sum_{j=1}^m p_j =1.$
Or more compactly,
$
P\left( {X\left( k \right) = {x^j}} \right) := {p_j}
$ 
where $x^j := [x_1^j\;x_2^j\; \cdots\;x_n^j]^T$ and $j\in \{1,2,\dots,m\}.$ 
In the sequel, we take  $x_{i,\min} := \min_j x_i^j$ and $  x_{i,\max} := \min_j x_i^j$  for $i=1,2,\dots,n.$
%



\subsection{Linear Trading Policy} 
As far as the computational tractability of a trading policy is concerned,  carryout optimization over all general causal policies is unrealistic.
 Instead, we will restrict attention to a memoryless linear trading policy that keeps the associated portfolio weights constant across all rebalancing stages.
Specifically,
to establish the trading scheme, for $k=0,1,\dots, $ let $V(k)$ be the \textit{account value }at stage~$k$. We consider the so-called \textit{linear} trading policy for the $i$th asset. That is, 
$$
u_i(k) := K_i V(k)
$$
where~$K_i$ is the weight for the $i$th asset.\footnote{The linear policy $u_i(k) = K_i V(k)$ is widely used  in practice.  One can readily convert the linear policy in terms of the corresponding number of shares. Specifically, let $N_i(k)$ be the number of shares invested at stage~$k$. 
	Then, with the price~$S_i(k)>0$, it follows that~$
	N_i(k) := u_i(k)/S_i(k).
	$}
The policy for the portfolio at stage~$k$ is given by 
$$\sum_{i = 1}^n u_i(k) = \sum_{i = 1}^n K_i V(k).$$
In the sequel, we shall take a vector notation
$$
K := [K_1 \; K_2\; \cdots \;K_n]^T.
$$
As seen later in Section~\ref{subsection: constraints considerations},  various practical trading requirements on $K$ that are imposed in our framework are discussed.

\subsection{Account Value Dynamics with Transaction Costs} 
Let $c_i \in (0,1)$ be a \textit{percentage transaction costs} on Asset~$i$.\footnote{
	For example, if one trades in Taiwan Stock Exchange, then a typical transaction cost is $\alpha \cdot 0.1425\%$ of the trade value for some $\alpha \in (0,1)$. As a second example, if one adopts some professional broker services such as Interactive Brokers Pro., it would cost $\$0.005$ per share with minimum fee \$1 dollar and maximum $1\%$ of the trade value. } 
That is, at stage $k$, if one invests $u_i(k)$ at Asset~$i$, then the associated transaction costs in dollar is $|u_i(k)| c_i$. The dynamics of account value at stage $k+1$ is characterized by the following stochastic recursive equation:
\begin{align}
	V(k + 1) 
	&= V(k) + \sum\limits_{i = 1}^n u_i(k) X_i(k) - \sum_{i = 1}^n |u_i(k)| c_i\\
	&= V(k) + \sum_{i = 1}^n u_i(k) \widetilde{X}_i(k)
\end{align}
where 
$$
\widetilde{X}_i(k) := 
\begin{cases}
	X_i(k) - c_i, & K_i \geq 0;\\
	X_i(k) + c_i, & K_i <0
\end{cases}
$$
is the \textit{fee-adjusted returns}.\footnote{Other transaction cost models are possible; e.g., one can add a term that is quadratic in the trade value;  e.g., see \cite{almgren2001optimal,garleanu2013dynamic,  boyd2017multi}.}
Via a straightforward calculation, it follows that the account value at terminal stage~$k=N$ for some integer $N>1$ is given~by 
\[
V\left( N \right) = \prod\limits_{k = 0}^{N-1} {\left( {1 + K^T \widetilde{X}(k)} \right)} V(0).
\]
In the sequel, we may sometimes write $V_K(N, {X})$ instead of~$V(N)$ to emphasize the dependence on feedback gain $K$ and return sequence ${X}:=\{{X}(k): k \geq 0\}.$  If $c_i :=0$ for all $i$, then the account value reduces to $V(N) =  \prod\limits_{k = 0}^{N-1} {\left( {1 + K^T {X}(k)} \right)} V(0)$, which is typically used in the literature; e.g., see \cite{cover2006elements,boyd2017multi, hsieh2018rebalancing, hsieh2020feedback}.

\subsection{Constraints Considerations} \label{subsection: constraints considerations}
In this subsection, we consider three practical constraints to be involved in our formulation. 
That is, $(i)$ shorting and leveraging $(ii)$ survival for all time; i.e.,~$V(k) \geq 0$ for all~$k$ with probability one, and $(iii)$ diversification constraint. 

\subsubsection{Shorting and Leverage Constraints.}
To allow shorting and leverage, for Asset $i$ and $k=0,1,\dots, N-1$, we  write the trading policy $u_i$ into two parts as follows.
\begin{align*}
	u_i(k) 
	&= K_i V(k) \\
	&:= K_{i,L} V(k)+ K_{i,S} V(k)
\end{align*}
where $K_{i,L} \ge 0$ represents the proportion of going \textit{long}  and $K_{i,S} \le 0$ represents the proportion of going \textit{short}. If $K_{i,L} + K_{i,S} \geq 0$, then it indicates one goes long with Asset $i$. 
Similarly, if $K_{i,L} + K_{i,S} \leq 0$, then one goes short with Asset $i$.
We require that $\sum_{i=1}^n |u_i(k)| \leq L V(k)$ for some leverage constant~$L \geq 1$.  
Equivalently, we have
\[
\sum_{i=1}^{n} |K_{i,L} + K_{i,S}| \leq L.
\]
Noting that the constraints above are characterized by various linear inequalities, it forms a convex polytope; see \cite{boyd2004convex}.

\subsubsection{Survival Constraints.}
It is important that the trade is \textit{survival}; i.e., $V(k) <0$ is disallowed for all $k$ with probability one.\footnote{A detailed discussion on the survival condition is referred to~\cite{hsieh2016kelly}. }
This requires that $(1+K^TX(k)) V(0) \geq 0$ for all $k$ with probability one. 
Since $V(0)>0$, it is equivalent to require
\[
 \sum_{i = 1}^n (K_{i,L} + K_{i,S}) X_i(k) \geq -1
\] 
for all $k$ with probability one. Since $x_{i,\min} \leq X_i(k) \leq x_{i,\max}$ for all $i=1,\dots,n$ and~$k=0,1,\dots,N-1$, it follows that
\[
\sum_{i = 1}^n K_{i,L} \min\{x_{i,\min},0\} +\sum_{i=1}^n K_{i,S} \max\{x_{i,\max},0\} \geq -1.
\]
Therefore, the survival constraint  is  imposed as follows:
\begin{align*}
	&\sum_{i = 1}^n {{K_{i,L}} | \min\left\{ { x_{i,\min }  ,0} \right\}}|  - \sum\limits_{i = 1}^n {{K_{i,S}}\max \left\{ {0,{x_{i,\max } }} \right\}}  \le 1.
\end{align*}
Similar to the shorting and leverage constraint, the survival constraints described above form a convex set.

\subsubsection{Diversified Holding Constraints.} 
From a risk management perspective, an exhibit of large concentrations in any specific asset should be avoided;  e.g., see \cite{luenberger2013investment, fabozzi2007robust, bodie2018investments}.
Hence, it is natural to add a constraint restricting the maximal holding weights in each asset.
To this end, we set
\[
K_{i,\min} \leq K_{i,L} + K_{i,S} \leq K_{i,\max}
\]
for some $K_{i,\min}\geq 0$ and $K_{i,\max} \leq 0$ representing the lower and upper bounds of the weights of Asset~$i$.
A typical choice is that $K_{i,\min} =0$ and $K_{i,\max} = L/m$ with $L \geq 1$ being the leverage constant and $m$ being the number of assets in the portfolio.
In the sequel, we shall use $\cal K$ to denote the admissible set of the totality of these three constraints.

\medskip
\begin{remark}\rm
	Since  all three constraints above are formed by various linear inequalities, 
	it is readily verified that the constraint set~$\cal K$ is convex and closed.
\end{remark}

\subsection{Distributional Robust  Log-Optimal Portfolio} \label{subsection: Expected log-portfolio problem}
We are now ready to study a distributional robust version of the log-optimal portfolio problem in which the probability distribution $p$ is unknown. 
Rather, we assume that $p \in \mathcal{P}$, a set of possible return distributions or the so-called \textit{ambiguity set}. 
Said another way, $\cal P$ is the set of all possible return distributions under some available prior information.
For $k=0,1,\dots,N-1$ and $c_i=0$,\footnote{In our setting, the percentage transaction costs can be lumped into original rate of returns to obtain the fee-adjusted returns. In addition, since we assume a linear policy, the log-optimal portfolio optimization problem without transaction costs is equivalent to that with transaction costs in the sense the optimal constant weighting vector $K$ remains unchanged.} consider the \textit{expected logarithmic growth rate}  given by
	\begin{align*}
		g_p(K) 
		&:=  \frac{1}{N} \mathbb{E}_p\left[  \log  \frac{V_K(N, {X})}{ V(0) }\right]\\
		&= \frac{1}{N} \mathbb{E}_p\left[ \sum_{k=0}^{N-1} \log (1+K^T {X}(k))  \right]\\
		&= \frac{1}{N}\sum_{k=0}^{N-1}  \mathbb{E}_p\left[  \log (1+K^T {X}(k))  \right]
	\end{align*}
where the $\mathbb{E}_p[\cdot]$ is the expectation taken with respect to the random vector $X(k)$ given that it follows the joint probabilities $p$. 
The subscript $p$ in $g_p$ is used to emphasize the dependence on the unknown probability~$p$.
	With the assumption that $X(k)$ are identically distributed in~$k$, we have
	\begin{align*}
	g_p(K) 
	&= \mathbb{E}_p[\log (1+K^T {X}(0)) ] \\
	&= \sum_{j = 1}^m p_j \log(1+K^Tx^j).
	\end{align*}
	Our objective is to seek an optimal $K \in \mathcal{K}$ achieving the \textit{distributional robust} expected logarithmic growth rate
	\begin{align}
	g^* := \max_{K \in \mathcal{K}} \; \inf_{p \in \mathcal{P}} g_p(K) 
	\end{align}
		where $\cal{K}$ is the admissible set of $K$ which captures the totality of the constraints described in Sections~\ref{subsection: constraints considerations}.
	Note that $\inf_{p \in \mathcal{P}} g_p(K)$ is concave in $K$ since it is an infimum of a family of concave functions of $K$.
	This is indeed a concave distributional robust optimization problem. 
	However, as indicated in~\cite{nemirovski2009robust}, such problems may not be tractable in general. 
	To this end, in the sequel, our aim is to show how such a problem can be tractably solved for a class of polyhedron probability sets $\mathcal{P}.$ In addition, as seen in later sections to follow, with a hyperplane approximation, the tractability can be improved further.
	
	\medskip
	\begin{remark}\rm
		$(i)$ Return compounds multiplicatively rather than additively. Hence, it is natural to consider maximizing $\mathbb{E}[ \log V(k)/V(0)] $ instead of $\mathbb{E}[V(k)/V(0)]$. 
	$(ii)$	As mentioned previously,~$g(K) = \mathbb{E}[\log(1+K^T {X}(0))]$ is concave in~$K$.
		Hence, if $\cal K$ is convex set, one obtains a concave program. 
	$(iii)$	If $\cal K$ is compact, then, according to Weierstrass Extremum Theorem; see, e.g., \cite{rudin1964principles}, a maximizing $K^*$ exists.
		$(iv)$ It is readily verified that the log-optimal portfolio problem with percentage transaction costs consideration is equivalent to that without the transaction costs in the sense that the optima are the same.
	\end{remark}

	\subsection{Polyhedron Ambiguous Return Distributions} \label{subsection: polyhedron ambigious return distributions}
	Let $p:=[p_1\; p_2\; \cdots p_m]^T \in \mathbb{R}^m$ and take
	$$S_m :=\left\{ p \in \mathbb{R}_+^{m}: \sum_{j = 1}^m p_j =1, p_j \geq 0 \;\; j=1,2,\dots,m\right\}
	$$ 
	to be a \textit{probability simplex} set
	where $\mathbb{R}_+^m:=\{x=[x_1\;x_2\; \cdots x_m]^T \in \mathbb{R}^m: x_i \geq 0, \; i=1,2,\dots,m\}$.
We assume that the ambiguity set of possible distributions $\cal P$ is given by a finite set of linear inequalities and equalities.\footnote{There are many more forms of the ambiguous distributional set that can be found in the literature; e.g., see \cite{dupacova2009stochastic} and references therein.} That is, $\mathcal{P}$ takes the form of
	\[
	\mathcal{P} := \{p \in S_m: A_0 p = d_0, \;\; A_1 p \leq d_1\}
	\] 
	where $A_0 \in \mathbb{R}^{m_0 \times m}$, $d_0 \in \mathbb{R}^{m_0}$, $A_1 \in \mathbb{R}^{m_1 \times m}$ and $d_1 \in \mathbb{R}^{m_1}.$ 
	Then the worst-case expected log-growth rate $g_\mathcal{P}(K):= \inf_{p \in \mathcal{P}} g_p(K)$ is given by the optimal value of the linear program:
	
	\medskip
	\begin{problem}[Worst-Case Expected Log-Growth] \label{problem: worst case ELG}
	\begin{align*}
		&\min_p \sum_{j = 1}^m p_j \log (1+K^Tx^j)\\
		&{\rm s.t.}\;\; \sum_{i = 1}^n p_j =1, p_j\geq 0, \; j=1,2,\dots,m;\\
		&\;\;\;\;\;\;\; A_0 p = d_0, A_1 p \leq d_1
	\end{align*}
with variables $p=[p_1\;\; p_2\;\; \cdots p_m]^T$.
	\end{problem}

\medskip
\begin{theorem}[Distributional Robust Log-Optimal Portfolio Problem] \label{theorem: distributional robust log-optimal portfolio problem}
Let $\mathcal{P}$ be a polyhedron set for ambiguous return  distributions. The distributional robust log-optimal portfolio problem $\max_{K \in \mathcal{K}} \; \inf_{p \in \mathcal{P}} g_p(K) $ is equivalent to 
	\begin{align*}
	&\max_{K,v, \lambda} \;\;\min_j \left( q(K) + A_0^T v + A_1^T \lambda \right)_j - v^Td_0 - \lambda^T d_1\\
	&{\rm s.t.} \;K \in \mathcal{K}, \; \lambda \succeq 0
\end{align*}
where $q(K):=[q_1(K) \; q_2(K) \; \cdots q_m(K)]^T$ with $q_j(K):=\log(1+K^Tx^j)$; $v \in \mathbb{R}^{m_0}$ and $\lambda \succeq 0$  means that $\lambda \in \mathbb{R}^{m_1}$ is component-wise nonnegative. 
Moreover, the problem above is a concave optimization problem.
\end{theorem}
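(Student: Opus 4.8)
The plan is to derive the stated reformulation by taking the linear-programming dual of the inner worst-case problem (Problem~\ref{problem: worst case ELG}) for each fixed $K$, and then merging that dual with the outer maximization over $K$. First I would fix $K \in \mathcal{K}$ and regard $q(K)$ as a constant cost vector, so the inner problem becomes the linear program of minimizing $q(K)^T p$ over $p$ subject to the simplex constraints $\mathbf{1}^T p = 1$, $p \succeq 0$ (where $\mathbf{1}$ is the all-ones vector in $\mathbb{R}^m$), together with $A_0 p = d_0$ and $A_1 p \preceq d_1$. Introducing a free multiplier $t$ for the normalization, a free multiplier $v$ for $A_0 p = d_0$, a nonnegative multiplier $\lambda \succeq 0$ for $A_1 p \preceq d_1$, and a nonnegative multiplier $s \succeq 0$ for $p \succeq 0$, I would form the Lagrangian and collect the terms linear in $p$. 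The dual function is finite only when the coefficient of $p$ vanishes, which after eliminating $s$ forces the component-wise condition $(q(K) + A_0^T v + A_1^T \lambda)_j + t \ge 0$ for all $j$; the residual dual objective is then $-v^T d_0 - \lambda^T d_1 - t$.

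Second, optimizing over the smallest admissible $t$ gives $t = -\min_j(q(K) + A_0^T v + A_1^T \lambda)_j$, and substituting yields precisely the dual objective $\min_j(q(K) + A_0^T v + A_1^T \lambda)_j - v^T d_0 - \lambda^T d_1$. I would then invoke strong linear-programming duality to identify the inner primal optimal value $g_{\mathcal{P}}(K) = \inf_{p \in \mathcal{P}} g_p(K)$ with this dual optimal value. This is legitimate because $\mathcal{P}$ is a nonempty polyhedron contained in the compact simplex $S_m$, so the primal is feasible with finite, attained optimum, and strong duality for linear programs holds with zero gap. Hence for each $K$ we have $g_{\mathcal{P}}(K) = \max_{v,\,\lambda \succeq 0}\big[\min_j(q(K)+A_0^T v + A_1^T \lambda)_j - v^T d_0 - \lambda^T d_1\big]$, and taking the outer maximum over $K \in \mathcal{K}$ collapses the nested maxima into the single joint maximization over $(K,v,\lambda)$ asserted in the theorem.

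Finally, for the concavity claim I would argue jointly in $(K,v,\lambda)$. Each component $q_j(K) = \log(1 + K^T x^j)$ is concave in $K$ on the region where it is defined, while $(A_0^T v)_j$ and $(A_1^T \lambda)_j$ are affine in $(v,\lambda)$; hence each map $(K,v,\lambda) \mapsto (q(K)+A_0^T v + A_1^T \lambda)_j$ is jointly concave. The pointwise minimum over $j$ preserves concavity, and subtracting the affine term $v^T d_0 + \lambda^T d_1$ leaves a concave objective. Since $\mathcal{K}$ is convex and closed (by the earlier Remark) and $\{\lambda \succeq 0\}$ is convex, the feasible set is convex, so the problem is a concave maximization.

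The step I expect to require the most care is verifying the hypotheses for strong duality together with the domain of the logarithm. One must confirm that $\mathcal{P}$ is nonempty and, more subtly, that the survival constraints embedded in $\mathcal{K}$ keep $1 + K^T x^j > 0$ strictly (rather than merely $\ge 0$), so that each $q_j(K)$ is finite and concave instead of degenerating to the $-\infty$ boundary; the relevant maximizing $K$ automatically avoid the boundary since any $K$ with $1 + K^T x^j = 0$ drives $g_p(K)$ to $-\infty$. Once this is settled, the remaining manipulations --- forming the dual, eliminating the sign multiplier, and checking joint concavity --- are routine.
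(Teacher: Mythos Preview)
Your proposal is correct and follows essentially the same route as the paper: dualize the inner linear program in $p$ for fixed $K$, invoke strong duality, merge with the outer maximization, and then check joint concavity. The only cosmetic difference is that the paper keeps the simplex constraint as an indicator and computes $\min_{p\in S_m} p^T(q(K)+A_0^Tv+A_1^T\lambda)=\min_j(\cdot)_j$ directly, whereas you fully dualize with an extra multiplier $t$ and then optimize it out; both lead to the identical expression.
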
	
	
\begin{proof}
We begin by writing down the Lagrangian
\[
\mathcal{L}(v, \lambda, p) = p^Tq(K) + v^T (A_0 p -d_0) + \lambda^T (A_1 p - d_1) +   1_{S_m}(p).
\]
where $q(K):=[q_1(K) \; q_2(K) \; \cdots q_m(K)]^T$ with $q_j(K):=\log(1+K^Tx^j)$; $v \in \mathbb{R}^{m_0}$ and $\lambda \in R^{m_1}$ with~$\lambda_j \geq 0$ are the {Lagrange dual variables}, and 
 $1_{S_m}(p)$ is an indicator function that represents the probability simplex constraint.  
Minimizing over $p$ yields the Lagrange dual function; i.e.,
\begin{align*}
g(v,\lambda) 
&= \inf_{p \in S_m}  \mathcal{L}(v, \lambda, p) \\
&=  \min_{p \in S_m}  p^T \left( q(K) + A_0^T v + A_1^T \lambda \right)   - v^Td_0 - \lambda^T d_1\\
&=  \min_{p \in S_m}  \sum_{j = 1}^m p_j \left( q(K) + A_0^T v + A_1^T \lambda \right)_j  - v^Td_0 - \lambda^T d_1\\
&\geq  \min_j \left( q(K) + A_0^T v + A_1^T \lambda \right)_j - v^Td_0 - \lambda^T d_1
\end{align*}
where the last inequality holds since $(q(K)  + A_0^T v + A_1^T \lambda)_j \geq \min_i (q(K)  + A_0^T v + A_1^T \lambda)_i$ for all $j=1,2,\dots,m$
and $(z)_j$ is the $j$th entry of the vector $z \in \mathbb{R}^m$. 
The lower bound is attained if $p_j = 1$ for some $j=1,\dots,m$. Hence,
$$
g(v,\lambda)  =  \min_j \left( q(K) + A_0^T v + A_1^T \lambda \right)_j - v^Td_0 - \lambda^T d_1
$$ 
and the dual problem associated with  Problem~\ref{problem: worst case ELG} is given by
\begin{align*}
&\max_{v, \lambda} \;\; \min_j \left( q(K) + A_0^T v + A_1^T \lambda \right)_j - v^Td_0 - \lambda^T d_1\\
&{\rm s.t.} \;\; \lambda \succeq 0.
\end{align*}
With the aids of Slater's condition, it is readily verified that the strong duality holds. Hence, it follows that the dual problem above has the same optimal value as Problem~\ref{problem: worst case ELG}. Therefore,  the distributional robust log-optimal portfolio problem can be written as
	\begin{align*}
&\max_{K,v, \lambda} \;\; \min_j \left( q(K) + A_0^T v + A_1^T \lambda \right)_j - v^Td_0 - \lambda^T d_1\\
&{\rm s.t.} \;K \in \mathcal{K}, \; \lambda \succeq 0 .
\end{align*}
To complete the proof, it suffices to show that the problem above is indeed a concave optimization problem. 
Begin by observing that $q(K) + A_0^Tv + A_1^T \lambda$ are concave in $K,v,\lambda$. Hence, the point-wise minimum is again concave. 
Now note that the remaining terms $v^Td_0$ and $\lambda^Td_1$ are affine in $v$ and~$\lambda$. Hence, it is readily verified that the objective function
$\min_j \left( q(K) + A_0^T v + A_1^T \lambda \right)_j - v^Td_0 - \lambda^T d_1
$ is concave in $K, v, \lambda$. Lastly, we note that $\mathcal{K}$ is a convex set and that the set $\lambda \succeq 0$ forms a convex positive orthant; hence, the intersection is again convex. Therefore, the problem considered has a concave objective with a convex constraint set; hence it is a concave optimization problem.
\end{proof}

\medskip
\begin{remark}\rm
In practice, while the true distributions for the returns may not be available for the trader, it may be partially known in the sense that the trader may ``estimate" or ``forecast" the distribution and form a ``confidence interval" centered at some nominal guess. 
The idea can be characterized by a ``box"-type ambiguous return distributions, which can be shown as a special case of the polyhedron set described above. See the following example to follow.
\end{remark}

\medskip
\begin{example}[Box Ambiguous Distribution Set] \label{example: box ambiguity set}
Assuming that $\mathcal{P}$ is a \textit{box}. That is, for each $p_j$ with $j=1,2,\dots, m$, it has a lower and upper bounds. Specifically, we consider
\[
\mathcal{P} := \{p \in S_m: |p_j - \overline{p}_j| \leq \rho_j, \; j=1,2,\dots,m \}
\]
where $\overline{p} \in S_m$ is the \textit{nominal} distribution\footnote{The nominal distribution can be obtained by estimation using the historical data.} and $\rho_j \geq 0$ is a prespcified \textit{radius} for all $j=1,\dots,m$. 
By setting $\rho:=[\rho_1 \; \rho_2\; \cdots \rho_m]^T$ and $\overline{p}:=[\overline{p}_1\; \overline{p}_2 \; \cdots \overline{p}_m]^T$, the box constraints can be written as $A_1 p \leq d_1$ with
$$
A_1 = \begin{bmatrix}
I \\ -I
\end{bmatrix}; \;\; d_1 :=\begin{bmatrix}
\rho + \overline{p} \\ \rho - \overline{p}  
\end{bmatrix}.
$$
According to Theorem~\ref{theorem: distributional robust log-optimal portfolio problem}, the associated distributional robust log-optimal portfolio problem becomes
	\begin{align*}
	&\max_{K, \lambda} \; \min_j \left( q(K)  + A_1^T \lambda \right)_j - \lambda^T d_1\\
	&{\rm s.t.} \;K \in \mathcal{K}, \; \lambda \succeq 0.
\end{align*}
where $(z)_j$ is the $j$th component of vector $z \in \mathbb{R}^m.$
\end{example}
	
\medskip
\begin{remark}
	As seen later in Section~\ref{section: Illstrative Example}, we shall adopt this box-type distribution set in our empirical studies with historical price data.
\end{remark}

	\section{Supporting Hyperplane Approximation} \label{section: supporting hyperplane approximation}
	In this section, we introduce the supporting hyperplane approximation approach to the distributional robust log-optimal portfolio optimization problem described in Section~\ref{section: Problem Formulation}.
	\subsection{Idea of Supporting Hyperplane Approximation} \label{subsection: Idea of Supporting Hyperplane Approximation}
	The main idea of the supporting hyperplane approximation is as follows.
	Given $x_{\min} \in (-1,0]$ and~$x_{\max}\geq 0$, consider a mapping $f: [x_{\min}, x_{\max}] \to \mathbb{R}$ with 
	$$
	f(x ) = \log(1+ x).
	$$
	It is straightforward to see that such function $f$ is monotonic and concave in $x$. 
	Thus, one way to approximate~$f$ is to introduce a surrogate function consists of $M$ hyperplanes as follows: Partitioning the interval~$[x_{\min}, x_{\max}]$ and get~$M$ partitioned points, say $z_l$ for $l=1,2,\dots,M$, with 
	$
	x_{\min} = z_1 < z_2 < \cdots < z_M =x_{\max}.
	$ Then we take the hyperplanes  of the form
	$$
	h_l(x) := a_l x + b_l
	$$
	where $a_l = 1/(1+z_l)$ and $b_l = \log(1+z_l) - a_l z_l$. Once the points $z_l$ are determined,  the hyperplanes are obtained. See Figure~\ref{fig: supporting hyperplan idea} for an illustration with $M=4$ supporting hyperplanes for $ \log(1+x)$.
	
	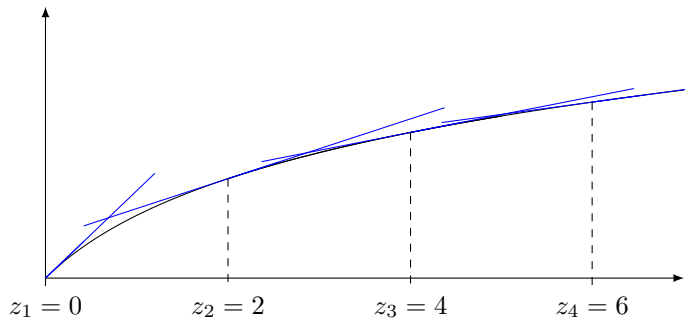
\begin{figure}[h!]
		\centering
		\begin{tikzpicture}[declare function={f(\x)=ln(1 + \x);}, scale =1.2]
			\draw[latex-latex] (0,3) -- (0,0) -- (7,0);
			
			\begin{scope}
				\clip (0,0) rectangle (7,4);
				\draw[tangent=0]  plot[samples=100,domain=0:7] ({\x},{f(\x)});
				\draw[use tangent,blue](-2,0)--(2,0); 
			\end{scope}
			\draw[dashed] (Tang) --(0,-0.1 -| Tang) node[below]{$z_1=0$};

			\begin{scope}
				\clip (0,0) rectangle (7,4);
				\draw[tangent=0]  plot[samples=100,domain=2:5] ({\x},{f(\x)});
				\draw[use tangent,blue](-2,0)--(3,0); 
			\end{scope}
			\draw[dashed] (Tang) --(0,-0.1 -| Tang) node[below]{$z_2=2$};

			\begin{scope}
				\clip (0,0) rectangle (7,4);
				\draw[tangent=0]  plot[samples=100,domain=4:5] ({\x},{f(\x)});
				\draw[use tangent,blue](-2,0)--(3,0); 
			\end{scope}
			\draw[dashed] (Tang) --(0,-0.1 -| Tang) node[below]{$z_3=4$};

			\begin{scope}
				\clip (0,0) rectangle (7,4);
				\draw[tangent=0]  plot[samples=100,domain=5.99:6] ({\x},{f(\x)});
				\draw[use tangent,blue](-2,0)--(3,0); 
			\end{scope}
			\draw[dashed] (Tang) --(0,-0.1 -| Tang) node[below]{$z_4=6$};
			
		\end{tikzpicture} 
		\caption{Approximation for $f(x):=\log(1+x)$  Via $M=4$ Supporting Hyperplanes.}
		\label{fig: supporting hyperplan idea}
	\end{figure}
	
	\subsection{Linear Program Formulation Via Hyperplanes} 
	The idea discussed previously in Section~\ref{subsection: Idea of Supporting Hyperplane Approximation} enables us to approximate the expected log-portfolio optimization problem formulated in Section~\ref{subsection: Expected log-portfolio problem} as a linear program. 
	Specifically,  for $j=1,\dots,m$, we have
		\begin{align*}
	q_j(K) &= \log(1+K^T x^j)\\
		&\approx   \min \left\{ h_1 \left( K^T {x}^j  \right), \dots, h_M \left(  K^T {x}^j  \right) \right\}.
	\end{align*}
%
	where ${x}^j := [{x}_1^j \; \; {x}_2^j \;  \cdots \; {x}_n^j ]^T$
	and
	\begin{align*}
		h_l( K^T {x}^j) 
		&=  a_l (K^T {x}^j )+ b_l \\
		&=a_l \left( \sum_{i=1}^n K_i {x}_i^j \right) + b_l
	\end{align*}
	for $l = 1, 2, \dots, M$, 
	which is  clearly linear in $K$. 
	Now,  for $j=1,2,\dots,m$, we define
	\[
	Z_j := \min \left\{ h_1\left( K^T {x}^j \right), h_2\left( K^T {x}^j \right), \dots, h_M\left( K^T {x}^j \right) \right\}.
	\]
With the aid of the supporting hyperplanes,  we are ready to reformulate the distributional robust log-optimal portfolio problem as  a linear program (LP) that involves the shorting and leverage, survival, and diversified holding constraints as follows.
	

	\medskip
	\begin{problem}[Log-Optimal Portfolio Problem Via Hyperplane Approximation] \label{problem: log-optimal portfolio optimization problem via hyperplane approximation}
	 Given constants $K_{i,\min}<0, K_{i,\max} > 0, L \geq 1$ and $c_i:=0$, we consider the following (robust) linear programming problem:
	\begin{align*}
	&\max_{K,v, \lambda,W} W - v^Td_0 - \lambda^T d_1\\
		&{\rm s. t.}\;\; \lambda \succeq 0\\
		&\hspace{4ex} K_{i,L} \ge 0; \;\; K_{i,S} \le 0, \;\;  i = 1,\dots,n;\\
		&\hspace{4ex} K_{i,\min} \leq K_{i,L}+ K_{i,S} \leq K_{i,\max} \;\; i=1,2,\dots, n;\\
		&\hspace{4ex} \sum_{i = 1}^n {\left| {{K_{i,L}} + {K_{i,S}}} \right|}  \leq L;\\
		&\hspace{4ex} \sum_{i = 1}^n { K_{i,L} |\min \left\{  x_{i,\min }, 0 \right\}} | 
		- \sum\limits_{i = 1}^n {{K_{i,S}}\max \left\{ {0, x_{i,\max} } \right\}}  \le 1\\
		&\hspace{4ex} {Z_j} \le {a_l}\left(  \sum_{i = 1}^n {\left( {{K_{i,L}} + {K_{i,S}}} \right)} {x}_{i}^j \right) + {b_l},\;\;\; j=1,2,\dots,m; \hspace{3mm} l=1,2,\dots,M\\
	&\hspace{4ex} W \leq Z_j + (A_0^T v)_j + (A_1^T \lambda )_j , \; \;\; j=1,2,\dots,m.
	\end{align*}
where $ z \succeq 0$ means that component-wise  non-negativity.
	\end{problem}
	
	Subsequently, it is readily to solve for $K_{i,L}$, $K_{i,S}$, $Z_j$ and $W$ for each $i,j$. 
	The associated optimal solution obtained by the hyperplane approximation-based linear programming is denoted by $K_{h}$ and the corresponding expected log-growth is  $g(K_h)$; see Example~\ref{example: a toy example} for a simple illustration of the theory.
	
	\medskip
	\begin{remark}\rm
		By treating $m$ as the number of scenarios, the formulation above can be viewed as a  \textit{scenario-based} linear program; see \cite{cornuejols2006optimization}. 
	\end{remark}

%
	
\medskip
	 \begin{example}[A Toy Example] \label{example: a toy example}\rm
	Consider a portfolio consisting of $n=2$ assets whose returns are identically distributed with $X_1(k) \in \{x_1^1, x_1^2\}:= \{0.1, -0.25\}$ and~$X_2(k) \in \{ x_2^1, x_2^2\}:=\{-0.1, 0.3\}$. 
		 	The  joint probability is described as follows: 
		 	$$
		 	P(X_1(k) = x_{1}^j, X_2(k) = x_{2}^j) = p_j \in (0,1)
		 	$$ 
		 	for $ j \in \{1,2\}$
and the nominal probabilities $\overline{p}_1 =0.7$ and $\overline{p}_2 = 0.3.$
	We assume that there is no transaction costs $c_i=0$ for $i=1,2$ and the set of ambiguous distribution set is the box
\[
\mathcal{P} :=\left\{ (p_1, p_2) \in S_2: |p_j -\overline{p}_j| \leq \gamma \cdot \overline{p}_j, \; j=1,2 \right\}
\]
with $\gamma:=0.1$ and $S_2:=\{(p_1,p_2): \sum_{j = 1}^2 p_j =1, p_j \geq 0, \; j=1,2\}$. 
With $p=[p_1\;\;p_2]^T$, the constraints $|p_j -\overline{p}_j| \leq \gamma \cdot \overline{p}_j, \; j=1,2$ can be expressed as a matrix inequality $A_1 p \leq d_1$ where
\[
A_1 = \begin{bmatrix}
1 & 0\\
0 & 1\\
-1 & 0\\
0 & -1
\end{bmatrix}; \;\; d_1 = \begin{bmatrix}
(\gamma + 1)\overline{p}_1  \\
(\gamma + 1)\overline{p}_2 \\
(\gamma - 1)\overline{p}_1  \\
(\gamma - 1)\overline{p}_2 
\end{bmatrix}.
\]

Assume that there is no transaction costs, i.e., $c_i = 0$ and  take $M = 3$ hyperplanes,   $K_{i,\min} := 0$, $K_{i,\max}:=1/2$, and~$L:=1$. Then the associated  linear programming problem is given by
	\begin{align*}
	&\max_{K,v, \lambda,W} W  - \lambda^T d_1\\
	&{\rm s. t.}\;\; \lambda \succeq 0\\
	&\hspace{4ex} K_{i,L} \ge 0; \;\; K_{i,S} \le 0, \;\;  i = 1,2;\\
	&\hspace{4ex} 0 \leq K_{i,L}+ K_{i,S} \leq 1/2;\; i=1,2;\\
	&\hspace{4ex} \sum_{i = 1}^ 2{{K_{i,L}} + {K_{i,S}}}  \leq 1;\\
	&\hspace{4ex} {Z_j} \le {a_l}\left(  \sum_{i = 1}^n {\left( {{K_{i,L}} + {K_{i,S}}} \right)} {x}_{i}^j \right) + {b_l},\;\;\; j=1,2; \hspace{3mm} l=1,2,3\\
	&\hspace{4ex} W \leq Z_j  + (A_1^T \lambda )_j , \; \;\; j=1,2.
\end{align*}

Under ambiguity constant $\gamma :=0.1$, 
the associated  portfolio weight obtained by the linear program is given by 
			$$
			K_h= [K_{h,1}\;\; K_{h,2} ]^T= [0.3016\;\; 0.5]
			$$
and the corresponding log-growth is $ g(K_h) = 0.00755$ approximately. Other  approximate portfolio weights under various $\gamma$ are summarized in Table~\ref{table: distributional_robust_example}.
On the other hand, the \textit{true} log-optimal portfolio weight under nominal distribution $\overline{p}$ is $K^* = [0.3698\;\;0.5]^T$ and the associated optimal growth rate is $g^* \approx 0.00761.$ 

\begin{table}[h!]
	\centering
	\caption{Approximate Optimal Portfolio Weights with $M=3$ Hyperplanes}
	\begin{tabular}{|c|c|c|c|}
		\hline
		& $K_{h,1}$ & $K_{h,2}$   & $g(K_h)$  \\
		\hline
		\makecell[c]{ $\gamma =0$ } & 0.3016 &0.5&0.00755 \\
		\hline
		\makecell[c]{$\gamma =0.10$ } & 0.5 &0.5 &0.00740 \\
		\hline
		\makecell[c]{$\gamma =0.15$ } & 0.5 &0.4986 &0.00738 \\
		\hline
		\makecell[c]{$\gamma =0.2$ } & 0.5 &0.4375 &0.00623 \\
		\hline
	\end{tabular}
	\label{table: distributional_robust_example}
\end{table}

\end{example}

%

\medskip										
\begin{remark}\rm
In particular, the number of hyperplanes $M$ can be viewed  as a new \textit{design} variable such that one can reconcile the computational complexity largely. 
To reflect this point, in the sequel, we may sometimes emphasis the dependence on $M$ by denoting that an optimal solution obtained via hyperplane approximation approach as
$
K_{h} := K_{h}(M).
$
\end{remark}

\section{Optimal Number of Hyperplanes} \label{section: optimal number of hyperplanes}
In the previous section, we discussed the hyperplane approximation approach with \textit{ad-hoc} tuning for the number of hyperplanes.  
In this section, we establish an algorithm that is used to select a ``optimal" number of hyperplanes.
That is, given an allowable approximation error, we seek the minimal number of hyperplanes that are needed so that the error is respected.

\subsection{Optimal Number of Hyperplanes}
Given a constant $\varepsilon >0$, let $x \in [x_{\min}, x_{\max}]$ with $x_{\min} > -1$.
Our objective  is to assure that the maximum approximation error of the hyperplane approximation approach is less than or equal to the specified constant~$\varepsilon$.
 That is, 
\[
\sup_{x \in [x_{\min}, x_{\max}]} | \log(1+x) - \min_i \{ a_i x + b_i \}| \leq \varepsilon
\]
where $a_i = 1/(1+x_i)$ and $b_i = \log(1+x_i) - a_i x_i$.\footnote{As seen in Section~\ref{section: supporting hyperplane approximation}, the analysis with the difference $\log(1+x) - \min_i \{a_i x+ b_i\}$ in this section is understood with~$x:=K^Tx^j$.}
To achieve this goal, we proceed as follows.
First we pick $x_0 := x_{\min}$, then compute the first hyperplane $h_0:=\{x: a_0 x+b_0=0\}$ which is tangent at~$x_0$ with the coefficient 
\[
a_0 = \frac{1}{1+x_{0}};\;\; b_0 = \log (1+x_{0})  - a_0 x_0.
\]
Now, let $\chi \in [x_0, x_{\max}]$ be a variable to be determined which is used for finding the second hyperplane $h_1:=\{x: a_1x+b_1=0\}$ with coefficients
\[
a_1 = \frac{1}{1+\chi};\;\; b_1 = \log (1+ \chi)  - a_1 \chi.
\]
Denote the intersection point  between these two hyperplanes $h_0$ and $h_1$ as $(x',y')$ with
\[
x' = \frac{b_1 - b_0}{a_0 - a_1};\;\; y' = a_0 x' + b_0.
\]
It is readily verified that the maximum approximate error happens at the intersection point. Hence, for $\chi >x_0,$ a lengthy but straightforward calculation leads to 
\begin{align}
 |y' - \log (1 + x')| 
	 &=  {{a_0}x' + {b_0} - \log \left( {1 + \frac{{{b_1} - {b_0}}}{{{a_0} - {a_1}}}} \right)} \nonumber \\
	& = \frac{{1 + \chi }}{{\chi  - {x_0}}}\log \frac{{1 + \chi }}{{1 + {x_0}}} - \log \left( {\frac{{1 + \chi }}{{\chi  - {x_0}}}\log \frac{{1 + \chi }}{{1 + {x_0}}}} \right) - 1 \label{eq: e_1(x)} \\ 
	& := {e_1}(\chi ). \nonumber
\end{align} 



	\begin{lemma}[Limiting Behavior of An Approximation Error]
		For any $x > x_0 >-1$,  it follows that
		$
		\lim_{x \to x_0} e_1(x) = 0.
		$
	\end{lemma}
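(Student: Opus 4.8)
The plan is to recognize that $e_1$ has the compact structure $e_1(\chi) = A(\chi) - \log A(\chi) - 1$, where I set
\[
A(\chi) := \frac{1+\chi}{\chi - x_0}\,\log\frac{1+\chi}{1+x_0},
\]
and then to reduce the entire limit to a single fact, namely that $A(\chi) \to 1$ as $\chi \to x_0$, combined with the continuity of the scalar map $g(A) := A - \log A - 1$ at $A = 1$, where $g(1) = 0$. The algebraic identification of the messy expression in~\eqref{eq: e_1(x)} as $A - \log A - 1$ is the step that collapses the problem into tractable form, so I would first verify this rewriting carefully by matching the first summand against $A(\chi)$ and the second against $\log A(\chi)$.

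Next I would compute $\lim_{\chi\to x_0} A(\chi)$, which is the only genuinely delicate piece, since it is a $0/0$ indeterminate form. The clean route is to rewrite the fraction as a difference quotient of $\log(1+\cdot)$:
\[
A(\chi) = (1+\chi)\cdot\frac{\log(1+\chi) - \log(1+x_0)}{\chi - x_0}.
\]
As $\chi \to x_0$, the quotient converges, by the very definition of the derivative, to $\frac{d}{dx}\log(1+x)\big|_{x=x_0} = \frac{1}{1+x_0}$, while the prefactor satisfies $1+\chi \to 1 + x_0 > 0$. Hence $A(\chi) \to (1+x_0)\cdot \frac{1}{1+x_0} = 1$. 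Alternatively, substituting $t := \chi - x_0 \to 0^+$ and Taylor-expanding $\log\!\big(1 + t/(1+x_0)\big)$ yields $A(\chi) = 1 + \frac{t}{2(1+x_0)} + O(t^2)$, which confirms both the limit and the local rate of approach.

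Finally, since $1 + x_0 > 0$ keeps $A(\chi)$ bounded away from the singularity of $\log$ near $\chi = x_0$, the map $g$ is continuous on a neighborhood of $A = 1$, and $g(1) = 1 - 0 - 1 = 0$. Composing the limit then gives $\lim_{\chi\to x_0} e_1(\chi) = g\big(\lim_{\chi\to x_0} A(\chi)\big) = g(1) = 0$, as claimed.

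The main obstacle is purely the indeterminate-form limit for $A$; everything else is continuity bookkeeping. Recasting the quotient as a difference quotient of $\log(1+x)$ dispatches that obstacle immediately, without any appeal to L'H\^opital's rule, and has the added benefit of exposing why $1 + x_0 > 0$ (guaranteed here by $x_0 > -1$) is exactly the hypothesis that makes the limit well behaved.
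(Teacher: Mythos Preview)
Your proposal is correct, and it takes a genuinely different route from the paper's own proof. You exploit the structural factorization $e_1(\chi) = g\bigl(A(\chi)\bigr)$ with $g(A) = A - \log A - 1$, reduce everything to showing $A(\chi)\to 1$, and handle that limit by recognizing $A(\chi)$ as $(1+\chi)$ times a difference quotient of $\log(1+\cdot)$, so the limit follows from the very definition of the derivative. The paper, by contrast, never isolates the composite structure $g\circ A$ explicitly; instead it invokes the elementary two-sided bound $\tfrac{z-1}{z}\le \log z \le z-1$ to sandwich $A(\chi)$ between $1$ and $\tfrac{1+\chi}{1+x_0}$, from which it deduces $-\log\tfrac{1+\chi}{1+x_0}\le e_1(\chi)\le \tfrac{1+\chi}{1+x_0}-1$ and concludes via the Squeeze Theorem. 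Your argument is cleaner and more conceptual: once the decomposition $e_1 = g\circ A$ is spotted, the problem collapses to a one-line derivative computation plus continuity of $g$ at $1$. The paper's squeeze argument is more elementary in the sense that it avoids any appeal to differentiability, but it requires guessing the right bounding functions and verifying the two inequalities on $e_1$ separately, which the paper in fact leaves largely implicit.
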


\begin{proof}
First note that for $x > x_0 >-1$, it follows that the ratio $\frac{1+x}{1+x_0} >0$. Hence, using the fact that $\frac{z-1}{z} \leq \log z \leq z-1$ for $z>0$, it implies that
		\begin{align}
		\frac{{x - {x_0}}}{{1 + x}} \leq \log \frac{{1 + x}}{{1 + {x_0}}} \leq \frac{{x - {x_0}}}{{1 + {x_0}}}. \label{ineq: upper and lower bound of log 1+x over 1+x0}
		\end{align}
According to  Equation~(\ref{eq: e_1(x)}), the approximate error is given by
\[
e_1(x) = \frac{ 1 + x }{x  - x_0}\log \frac{1 + x }{1 + x_0} - \log \left( {\frac{1 + x }{x  - x_0}\log \frac{1 + x }{{1 + x_0}}} \right) - 1  . 
\]
With the aid of Inequality~(\ref{ineq: upper and lower bound of log 1+x over 1+x0}), it follows that
		\[
		- \log \frac{{1 + x }}{{1 + x_0}} \leq {e_1}(x ) \leq \frac{{1 + x }}{{1 + {x_0}}} - 1.
		\]
Since $e_1(x)$ is continuous, the limit exists. In addition, note that 
$$
\lim_{x \to x_0} \frac{1+x}{1-x_0}-1=\lim_{x \to x_0} \left( -\log \frac{1+x}{1+x_0} \right) =0.
$$ 
By the Squeeze Theorem; e.g., see \cite{stewart2020calculus}, it follows that~$e_1(x) \to 0$ as $x \to x_0$. 
	
\end{proof}

\medskip
Now, solving $e_1(x) = \varepsilon$ for $x$ and letting its solution to be denoted as $x_1$. 
This procedure can be easily to extend for finding the successive points $\{x_i\}_{i=1}$. 
For instance, given $\varepsilon>0$ and the partition point $x_{i-1}$, to find the  $i$th successive point $x_i$, we proceed as follows: 
We first calculate the approximate error
\begin{align*}
	{e_i}(x) 
	&= \frac{1 + x}{x - x_{i - 1} }\log \frac{1 + x}{ 1 + x_{i - 1} } - \log \left( \frac{1 + x}{x - x_{i - 1}}\log \frac{1 + x}{1 + x_{i - 1}} \right) - 1.
\end{align*}
Then $x:=x_i$ is the solution of $e_i(x) = \varepsilon$. The existence and uniqueness  of $x_i$ is justified by the following lemma.

\begin{lemma}[Monotonic Approximate Error] \label{lemma: Monotoic Approximate Error}
	For any $x$ satisfying $x> x_0 > -1$, the approximate error 
\begin{align*}
	{e_i}(x) 
	&= \frac{1 + x}{x - x_{i - 1} }\log \frac{1 + x}{ 1 + x_{i - 1} } - \log \left( \frac{1 + x}{x - x_{i - 1}}\log \frac{1 + x}{1 + x_{i - 1}} \right) - 1
\end{align*} for $i=1,2,\dots, M$
 is strictly increasing in~$x$.  
\end{lemma}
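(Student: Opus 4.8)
The plan is to exhibit $e_i$ as the composition of a fixed scalar profile with a single auxiliary quantity, so that monotonicity in $x$ collapses to two one-variable facts. First I would introduce
\[
\phi(t) := t - \log t - 1 \quad (t>0), \qquad g(x) := \frac{1+x}{x - x_{i-1}} \log \frac{1+x}{1+x_{i-1}},
\]
so that $e_i(x) = \phi\bigl(g(x)\bigr)$. This is exactly the structure of the defining formula: the error is ``(a positive quantity) minus (its logarithm) minus one.''

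The key observation I would exploit is that $g$ depends on $x$ only through the ratio $s := (1+x)/(1+x_{i-1})$. Writing $1+x = s(1+x_{i-1})$ and $x - x_{i-1} = (s-1)(1+x_{i-1})$ collapses $g$ into a genuine one-variable function,
\[
g(x) = \psi(s), \qquad \psi(s) := \frac{s}{s-1} \log s.
\]
Since $x > x_{i-1} > -1$ forces $1 + x_{i-1} > 0$ and $s > 1$, and since $s$ is strictly increasing in $x$, the task reduces to showing that $\psi$ is increasing on $(1,\infty)$ and that $g(x) > 1$ there.

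Next I would differentiate $\psi$: a short computation yields
\[
\psi'(s) = \frac{(s-1) - \log s}{(s-1)^2},
\]
and the elementary inequality $\log s < s-1$ for $s>1$ — already invoked in the proof of the preceding lemma — makes the numerator strictly positive. Hence $\psi' > 0$ on $(1,\infty)$, so $g$ is strictly increasing in $x$; moreover $\lim_{s\to 1^+}\psi(s) = 1$ together with $\psi'>0$ gives $g(x) = \psi(s) > 1$. Finally, since $\phi'(t) = 1 - 1/t > 0$ for $t > 1$, the chain rule delivers
\[
e_i'(x) = \phi'\bigl(g(x)\bigr)\, g'(x) > 0,
\]
which is the claimed strict monotonicity, uniformly in $i$ because the argument uses only $x > x_{i-1} > -1$.

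The main obstacle is recognitional rather than computational: spotting the ratio substitution $s = (1+x)/(1+x_{i-1})$ that decouples $g$ into the clean profile $\psi$. Once that reduction is in hand, both required facts — monotonicity of $\psi$ and the bound $g > 1$ — follow from the single inequality $\log s < s-1$, with nothing more delicate needed. A direct differentiation of $e_i$ with respect to $x$ would also work, but it is considerably messier and obscures why the derivative has a definite sign.
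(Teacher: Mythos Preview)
Your argument is correct and considerably cleaner than the paper's. The paper differentiates $e_1$ directly with respect to $x$, obtaining $e_1'(x)=f(x)-g(x)$ for two rather involved rational-logarithmic expressions, and then argues by contradiction: assuming $f(x)-g(x)\le 0$, it manipulates the resulting inequality until the bound $\log((1+x)/(1+x_0))<(x-x_0)/(1+x_0)$ forces $x_0<-1$, a contradiction. Your route bypasses all of this by recognizing the composition structure $e_i=\phi\circ g$ with $\phi(t)=t-\log t-1$ and then collapsing $g$ to the single-variable profile $\psi(s)=\tfrac{s}{s-1}\log s$ via the ratio $s=(1+x)/(1+x_{i-1})$. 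Both proofs ultimately rest on the same elementary inequality $\log s<s-1$ for $s>1$, but yours isolates it cleanly: one application gives $\psi'(s)=\tfrac{(s-1)-\log s}{(s-1)^2}>0$, and the limit $\psi(s)\to 1$ as $s\to 1^+$ then gives $g>1$, which is exactly the range where $\phi'>0$. The paper's direct differentiation obscures why the sign is definite and requires more algebra; your decomposition makes the mechanism transparent and also explains, as a by-product, why the iterative formula of Lemma~\ref{lemma: iterative formula for next x_i} takes the form it does (the same substitution appears there).
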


\begin{proof}
Let $x$ satisfying $x> x_0 > -1$. It suffices to show that $e_1(x)$ is strictly increasing since~$e_i(x)$ takes the an almost identical form of $e_1$.
In particular, note that
\[
e_1 (x)  = \frac{{1 + x }}{{x  - {x_0}}}\log \frac{{1 + x }}{{1 + {x_0}}} - \log \left( {\frac{{1 + x }}{{x  - {x_0}}}\log \frac{{1 + x }}{1 + x_0}} \right) - 1 .
\]
Taking derivative of $e_1(x)$ with respect to $x$, we obtain
$$
\frac{d}{{dx}}e_1 (x)  = f(x) - g(x)
$$
where
\begin{align*}
	f(x) & := \frac{{x + x_0^{} + 2}}{{\left( {x + 1} \right)\left( {x - x_0^{}} \right)}};\\
	g(x) & := {\frac{{\left( x_0 + 1 \right)\log \left( {\frac{ 1+x}{ 1+x_0 }} \right)}}{{{{\left( {x - x_0} \right)}^2}}} + \frac{1}{( 1+x  )\log \left( \frac{1+x}{1 + x_0 } \right)}}.
\end{align*}
To prove that $e_1(x)$ is strictly increasing, it suffices to show that  $f(x) - g(x) > 0$ for all $x$.  Suppose this is not the case by assuming that there exists $x>x_0>-1$ such that $f(x) - g(x) \le 0$.
Take such an $x$. 
Now substituting $f(x)$ and $g(x)$ back into the inequality $f(x)-g(x) \leq 0$, a lengthy but straightforward calculation yields
\begin{align}
	\left[ {\left( x_0 + 1 \right)\left( {x + 1} \right)b - \left( {x - x_0^{}} \right)\left( {x + x_0^{} + 2} \right)} \right]b + {\left( {x - x_0^{}} \right)^2} \ge 0 \label{ineq: de_x >=0}
\end{align}
where $b := \log\left( (1+x)/(1+x_0) \right)>0$ since $x > x_0 > -1$. 
Note that the second term ${\left( {x - x_0^{}} \right)^2}>0$, Inequality (\ref{ineq: de_x >=0}) holds if  $ {\left( x_0 + 1 \right)\left( {x + 1} \right)b - \left( {x - x_0} \right)\left( {x + x_0+ 2} \right)} \geq 0$.
Equivalently,
\begin{align}
	b \ge \frac{{\left( {x - x_0^{}} \right)\left( {x + x_0^{} + 2} \right)}}{{\left( {x_0+ 1} \right)\left( {x + 1} \right)}}. \label{ineq: b inequality}
\end{align}
Note that for $x > x_0 >-1$, using the fact that $\log (1+x) < x$, it follows that
$
b < \frac{x - x_0}{1 + x_0}.
$
Hence, in combination with Inequality~(\ref{ineq: b inequality}), we  conclude that
\[\frac{{x - {x_0}}}{{1 + {x_0}}} > b \ge \frac{{\left( {x - x_0^{}} \right)\left( {x + x_0^{} + 2} \right)}}{{\left( {x_0^{} + 1} \right)\left( {x + 1} \right)}}
\]
However, the inequality on left-hand side and right-hand side yields $0 > x_0^{} + 1$ which contradicts to $x_0 > -1.$  Therefore, $f(x) - g(x) >0$, which proves that $e_1(x)$ is strictly increasing.
\end{proof}

\medskip			
\begin{remark}[Existence and Uniqueness of Partition Points]\rm
	Given a specified approximate error $\varepsilon >0$, Lemma~\ref{lemma: Monotoic Approximate Error} tells us that
 	  the corresponding point $x$ that solves $e_i(x) = \varepsilon$  can be determined uniquely since~$e_i(x)$ is strictly increasing.  
\end{remark}

\medskip
\subsection{Determination of The Successive Partition Points}
Given the $i$th partition point $x_i$ and $\varepsilon>0$, it is possible to determine the successive partition point~$x_{i+1}$ by an iteration.

\begin{lemma}[Iterative Formula for Successive Partition Point]  \label{lemma: iterative formula for next x_i}
	Given $\varepsilon>0$ and the partition point~$x_{i} >x_0>-1$ that satisfies $e_i(x_i)=\varepsilon$. The successive partition point
	$x_{i+1}$ that also satisfies $e_{i+1}(x_{i+1}) = \varepsilon$ can be written as
	\[
x_{i+1} = (1 + \alpha^*) x_{i} + \alpha^*  
	\]
	where $\alpha=\alpha_\epsilon >0$ solves
	$
	\frac{{1 + \alpha }}{\alpha }\log \left( {1 + \alpha } \right) = \beta^*
	$
	with $\beta^* $ obtained as a solution of
	$
	\beta - \log \beta - 1 = \varepsilon.
	$
\end{lemma}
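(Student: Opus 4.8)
The plan is to reduce the defining equation $e_{i+1}(x_{i+1}) = \varepsilon$ to a transcendental equation that decouples the dependence on $\varepsilon$ from the dependence on $x_i$, via a scale-invariant change of variables. First I would write out the defining relation explicitly, namely
\[
\frac{1+x_{i+1}}{x_{i+1}-x_i}\log\frac{1+x_{i+1}}{1+x_i} - \log\left(\frac{1+x_{i+1}}{x_{i+1}-x_i}\log\frac{1+x_{i+1}}{1+x_i}\right) - 1 = \varepsilon,
\]
and observe that the entire left-hand side is a function of the single inner quantity $\beta := \frac{1+x_{i+1}}{x_{i+1}-x_i}\log\frac{1+x_{i+1}}{1+x_i}$, so that the equation collapses to $\beta - \log\beta - 1 = \varepsilon$. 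This is precisely the defining relation for $\beta^*$.

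The \emph{crucial step} is the substitution $\alpha := \frac{x_{i+1}-x_i}{1+x_i}$, equivalently $x_{i+1} = (1+\alpha)x_i + \alpha$. Under it one has the three identities $1+x_{i+1} = (1+\alpha)(1+x_i)$, $x_{i+1}-x_i = \alpha(1+x_i)$, and $\frac{1+x_{i+1}}{1+x_i} = 1+\alpha$; substituting these into the definition of $\beta$ causes every factor of $(1+x_i)$ to cancel, leaving $\beta = \frac{1+\alpha}{\alpha}\log(1+\alpha)$. Combining with $\beta = \beta^*$ yields exactly $\frac{1+\alpha}{\alpha}\log(1+\alpha) = \beta^*$, which is the equation satisfied by $\alpha^*$. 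Inverting the affine substitution then gives the claimed closed form $x_{i+1} = (1+\alpha^*)x_i + \alpha^*$, so the algebraic heart of the proof is merely this cancellation.

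The remaining work, and the main subtlety, is to confirm that $\beta^*$ and $\alpha^*$ are well defined. The map $\beta \mapsto \beta - \log\beta - 1$ is convex with minimum value $0$ at $\beta = 1$, so for $\varepsilon > 0$ it admits two roots; I would argue that the admissible branch is $\beta^* > 1$, using the inequality $\frac{1+\alpha}{\alpha}\log(1+\alpha) > 1$ for every $\alpha > 0$, which follows from $\log(1+\alpha) > \frac{\alpha}{1+\alpha}$ as in Inequality~(\ref{ineq: upper and lower bound of log 1+x over 1+x0}). For the inversion $\frac{1+\alpha}{\alpha}\log(1+\alpha) = \beta^*$ I would invoke strict monotonicity of $\psi(\alpha) := \frac{1+\alpha}{\alpha}\log(1+\alpha)$ on $\alpha > 0$: since $e_{i+1}$ is strictly increasing by Lemma~\ref{lemma: Monotoic Approximate Error}, $\alpha$ is strictly increasing in $x_{i+1}$, and $\beta \mapsto \beta - \log\beta - 1$ is strictly increasing and invertible on $(1,\infty)$, the composition forces $\psi$ to be strictly increasing. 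Together with the limits $\psi(\alpha) \to 1$ as $\alpha \to 0^+$ and $\psi(\alpha) \to \infty$ as $\alpha \to \infty$, this makes $\psi$ a bijection onto $(1,\infty)$, so $\alpha^*$ exists and is unique. I expect this well-posedness check, rather than the reparametrization, to be where the care is needed.
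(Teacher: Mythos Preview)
Your proposal is correct and follows essentially the same approach as the paper: introduce the inner quantity $\beta$, reduce $e_{i+1}(x_{i+1})=\varepsilon$ to $\beta-\log\beta-1=\varepsilon$, then reparametrize via $\alpha=(x_{i+1}-x_i)/(1+x_i)$ so that $\beta=\frac{1+\alpha}{\alpha}\log(1+\alpha)$ and invert the affine map to recover $x_{i+1}=(1+\alpha^*)x_i+\alpha^*$. Your well-posedness discussion (selecting the branch $\beta^*>1$ and verifying that $\psi(\alpha)=\frac{1+\alpha}{\alpha}\log(1+\alpha)$ is a bijection from $(0,\infty)$ onto $(1,\infty)$) is in fact more careful than the paper's, which simply invokes Lemma~\ref{lemma: Monotoic Approximate Error} to declare $\beta^*$ uniquely determined without addressing the two-root issue.
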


\begin{proof}
Fix $\varepsilon>0$. Take  $x_i$ such that $e_i(x_i)=\varepsilon$ and $x_{i+1}$ such that $e_{i+1}(x_{i+1}) = \varepsilon$. Now consider an auxiliary function 
$
f(\beta) := \beta - \log \beta - 1 - \varepsilon
$
and we choose 
\begin{align*}
	\beta 
	&:=  \frac{ 1 + x_{i+1}}{x_{i+1} - x_{i } } \log \frac{ 1 + x_{i+1}}{1 + x_{i } }.
\end{align*}
Then it solves $f(\beta)=0$ since it satisfies the approximate error function $e_{i+1}(x_{i+1}) = \varepsilon$. 
Furthermore, by Lemma~\ref{lemma: Monotoic Approximate Error}, $e_{i+1}(x)$ is strictly increasing, which implies that $f(\beta)$ is also strictly increasing. 
Hence,~$\beta$ is uniquely determined, call such solution as $\beta := \beta^*.$ 
Now, observe that
\begin{align*}
	\beta^*   
	&= \frac{ 1 + x_{i+1}}{ x_{i+1} - x_{i }}\log \left( 1 + \frac{ x_{i+1} - x_{i}}{1 + x_{i} } \right)\\[1ex]
	&= \frac{ \frac{1 + x_{i+1}}{1+x_i}}{  \frac{x_{i+1} - x_{i }}{1+x_i} }\log \left( 1 + \frac{ x_{i+1} - x_{i}}{1 + x_{i} } \right)\\[2ex]
	&= \frac{ 1+ \frac{x_{i+1} - x_i }{1+x_i}}{  \frac{x_{i+1} - x_{i }}{1+x_i} }\log \left( 1 + \frac{ x_{i+1} - x_{i}}{1 + x_{i} } \right)\\[1ex]
	&= \frac{{1 + \alpha }}{\alpha }\log \left( {1 + \alpha } \right)
\end{align*}
where
\begin{align}
	\alpha  &=\frac{x_{i+1} - x_i}{ 1+x_i}:= \alpha^* \label{eq: alpha}.
\end{align}
Note that $\alpha^* >0$ since $x_{i+1}> x_{i} > -1$. 
Then, by Equation~(\ref{eq: alpha}), it is readily verified  that 
$$
x_{i+1} = (1 + \alpha^*) x_{i} + \alpha^*. 
$$
\end{proof} 

\begin{remark}\rm
	Having obtained the iterative formula in Lemma~\ref{lemma: iterative formula for next x_i}, it enables us to readily determine the \textit{minimum} number of supporting hyperplanes that assures an allowable approximate error. This is summarized in Algorithm~\ref{alg::OptNum_Hypers} to follow.
\end{remark}


\begin{algorithm}[h!]
	\caption{$\varepsilon$-Optimal Number of Hyperplanes}
	\label{alg::OptNum_Hypers}
	\begin{algorithmic}[1]
		\Require
		Given $\varepsilon >0$ and pick $x_{\min}$, $ x_{\max}$ with~$x_{\min} >-1$ and $x_{\max}\geq 0 \geq x_{\min}$;
		\Ensure
		optimal partition points $\{x_i\}_{i=0}$    
		\State Initial Step: Set $x_0 := x_{\min}$. Compute $
		a_0 = \frac{1}{1+x_{0}}$ and $b_0 = \log (1+x_{0})  - a_0 x_0$ to get the first hyperplane $a_0 x + b_0$;
		\Repeat
		\State Use Lemma~\ref{lemma: iterative formula for next x_i} to find $x_i$ by solving $e_i(x_i) = \varepsilon$;
		\State Use $x_i$ to compute $a_i=1/(1+x_i)$ and $b_i=\log(1+x_i) - a_i x_i$  ;
		\State Get the associated hyperplane $a_i x + b_i$;
		\Until{($x_i \geq x_{\max}$)}
	\end{algorithmic}
\end{algorithm}

\medskip
	\begin{theorem}[Optimal Number of Hyperplanes] \label{theorem: optimal number of hyperplanes}
		Given $\varepsilon >0$, $x_{\max}$ and $x_{\min}$ with~$x_{\max}> x_{\min} >-1$,  Algorithm~\ref{alg::OptNum_Hypers} leads to the minimum number of supporting hyperplanes satisfying the approximation error $e_i(x) \leq \varepsilon$ for all $x \in [x_{i-1}, x_{i}] \subset [x_{\min}, x_{\max}]$  and $i=1,2,\dots,M$.
	\end{theorem}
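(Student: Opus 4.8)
The plan is to collapse the whole question to a single scalar invariant, the ratio $(1+x_i)/(1+x_{i-1})$ between consecutive partition points, and then run a telescoping-product argument. First I would observe that the error recorded in Equation~(\ref{eq: e_1(x)}) depends on its two endpoints only through their ratio. Writing $a:=x_{i-1}$, $b:=x_i$ and $r:=(1+b)/(1+a)$, and using $b-a=(1+a)(r-1)$, the prefactor $\frac{1+b}{b-a}$ reduces to $\frac{r}{r-1}$, so that
\[
e_i(x_i)=\frac{r}{r-1}\log r-\log\!\left(\frac{r}{r-1}\log r\right)-1=:\phi(r),
\]
a quantity independent of the absolute location $a$. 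This is exactly what underlies the scale-invariant recursion $1+x_{i+1}=(1+\alpha^*)(1+x_i)$ already established in Lemma~\ref{lemma: iterative formula for next x_i}. Since Lemma~\ref{lemma: Monotoic Approximate Error} gives that $e_i$ is strictly increasing in $x_i$, and $r$ is strictly increasing in $x_i$ for fixed $a$, I would conclude that $\phi$ is strictly increasing for $r>1$; hence the tolerance constraint $e_i(x_i)\le\varepsilon$ on a subinterval is equivalent to the clean ratio bound $r\le 1+\alpha^*$, where $\alpha^*$ is the constant of Lemma~\ref{lemma: iterative formula for next x_i}, characterized by $\phi(1+\alpha^*)=\varepsilon$.

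With this reduction in hand, the lower bound follows from a telescoping product. Let $x_{\min}=z_0<z_1<\cdots<z_{M'}=x_{\max}$ be any admissible partition whose $M'$ subintervals each meet the error tolerance. By the reduction, every ratio satisfies $(1+z_i)/(1+z_{i-1})\le 1+\alpha^*$, and multiplying over $i=1,\dots,M'$ the left-hand side telescopes:
\[
\frac{1+x_{\max}}{1+x_{\min}}=\prod_{i=1}^{M'}\frac{1+z_i}{1+z_{i-1}}\le(1+\alpha^*)^{M'}.
\]
Taking logarithms yields $M'\ge\log\!\big((1+x_{\max})/(1+x_{\min})\big)/\log(1+\alpha^*)$, and since $M'$ is an integer, $M'\ge\big\lceil\log((1+x_{\max})/(1+x_{\min}))/\log(1+\alpha^*)\big\rceil$.

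Finally I would check that Algorithm~\ref{alg::OptNum_Hypers} attains this bound. The algorithm saturates each step, forcing the ratio to be exactly $1+\alpha^*$, so after $i$ steps $1+x_i=(1+\alpha^*)^i(1+x_{\min})$ by Lemma~\ref{lemma: iterative formula for next x_i}; it terminates at the first index $M$ with $x_M\ge x_{\max}$, which is precisely the ceiling above, and the terminal subinterval $[x_{M-1},x_{\max}]$ has ratio at most $1+\alpha^*$ and hence error at most $\varepsilon$, so the output is admissible. Matching the lower bound then shows $M$ is minimal. The step I expect to be the main obstacle is the reduction itself, specifically justifying that on each subinterval the worst-case error equals $\phi$ of the endpoint ratio: this requires that the min-of-tangents envelope is governed locally by only the two supporting hyperplanes at the interval's endpoints, a fact I would derive from the concavity and monotone derivative of $\log(1+x)$ (so that tangents taken at points outside the interval never bind inside it), together with the observation from Section~\ref{subsection: Idea of Supporting Hyperplane Approximation} that the admissible schemes are exactly those whose tangent points include $x_{\min}$ and $x_{\max}$, which is what makes the telescoping identity exact.
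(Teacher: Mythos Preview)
Your argument is correct and takes a genuinely different route from the paper. The paper proceeds by contradiction: it assumes an alternative partition $\{z_0,\dots,z_L\}$ with $L<M$ also meets the tolerance, and then argues (informally, via a pigeonhole-type step) that some $z_j$ must overshoot the corresponding $x_j$ while $z_{j-1}\le x_{j-1}$, so that by the strict monotonicity of Lemma~\ref{lemma: Monotoic Approximate Error} the error on $[z_{j-1},z_j]$ exceeds $\varepsilon$. Your approach instead isolates the scale-invariance of $e_i$ --- that it depends only on the ratio $r=(1+x_i)/(1+x_{i-1})$ --- and converts the minimality question into an explicit counting bound via the telescoping product $\prod_i r_i=(1+x_{\max})/(1+x_{\min})$. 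This buys you a closed-form expression $M=\big\lceil\log\!\big((1+x_{\max})/(1+x_{\min})\big)/\log(1+\alpha^*)\big\rceil$ for the optimal number of subintervals, which the paper's contradiction argument does not yield, and it sidesteps the need to track both endpoints of a subinterval separately (Lemma~\ref{lemma: Monotoic Approximate Error} in the paper only establishes monotonicity in the right endpoint). The obstacle you flag --- that only the two adjacent tangents govern the envelope on $[x_{i-1},x_i]$ --- is genuine but routine: for a strictly concave $f$, the tangent lines $T_{z}$ and $T_{w}$ with $z<w$ cross exactly once, at a point in $(z,w)$, so any tangent taken at a point outside $[x_{i-1},x_i]$ is dominated on that interval by the tangent at the nearer endpoint; this justifies equating the subinterval's worst error with $\phi$ of the endpoint ratio.
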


\begin{proof}
Suppose that Algorithm~\ref{alg::OptNum_Hypers} leads to a set of finite points $\{x_0, x_1, \dots,x_{M}\}$ 
satisfying 
$$
x_{\min}= x_0 < x_1 < \dots < x_{M} = x_{\max}
$$
which partitions the interval $[x_{\min},x_{\max}].$
The corresponding approximation error is $e_i(x) \leq \varepsilon$ for all~$x \in [x_{i-1},x_{i}]$ and all $i$. Hence, $\sup_i e_i = \varepsilon$. 
We now proceed a proof by contradiction. Suppose that there is an alternative hyperplane generating algorithm that generates a less number of hyperplanes than that generated by Algorithm~\ref{alg::OptNum_Hypers}; i.e., it leads to a new set of finite points 
$\{z_0, z_1, \dots, z_L\}$ satisfying 
$$
x_{\min}= z_0 < z_1 < \dots < z_{L} = x_{\max}$$ with $L<M$ which also partitions the interval $[x_{\min},x_{\max}]$ and the associated approximation error is~$e_j(z) \leq \varepsilon$ for all $z \in [z_{j-1}, z_j]$ and all $j$. Hence, $\sup_j z_j \leq \varepsilon$.
By Lemma~\ref{lemma: Monotoic Approximate Error}, it follows that the approximation error function $e_i(x)$ is strictly increasing. In addition,  $e_i(x_i) = \varepsilon$ for all $i$.
Hence, fix $x_i$, there exist a point $z_j$ for some $j \in \{1,2,\dots,L\}$ such that $z_j > x_i$ and the associated approximation error
$
e_j(z_j) > e_i(x_i) = \varepsilon.
$ 
Hence, it follows that 
$
\sup_j e_j > \varepsilon
$
which is a contradiction to the fact that $\sup_j e_j \leq \varepsilon$ 
%
Therefore, Algorithm~\ref{alg::OptNum_Hypers} leads to the minimum hyperplanes satisfying the approximate error $e_i(x) = \varepsilon$ for all $x \in [x_{\min}, x_{\max}]$.  
\end{proof}
			
										
\medskip										
\begin{remark}\rm
	If there is no ambiguity and the joint probabilities $p$ are known in advance, then Lemma~\ref{lemma: Monotoic Approximate Error} and Theorem~\ref{theorem: optimal number of hyperplanes} imply that the log-optimal portfolio $K^*$ and the approximate optimal portfolio $K_h$ can be made as close as possible as the number $M$ of hyperplanes increases.
\end{remark}										

\section{Empirical Studies Using Historical  Data} \label{section: Illstrative Example}
	To illustrate the theory, we solve a distributional robust log-optimal portfolio problem via the hyperplane approximation approach.
	
\medskip	
\begin{example}[Experiments with Historical Stock Prices] \rm  \label{example: numerical experiments}
We consider a portfolio consisting of the top $n=15$ companies in the S\&P500 Index covering the period from January~2,~2021 to June~30,~2021 ($N=124$ trading days) as for in-sample optimization; see Table~\ref{table: top 15 companies} for the symbols of these companies. 
An additional six months, starting from July 01, 2021, to December~31, 2021,  are also included for the out-of-sample test.
The data are obtained from the Yahoo! Finance website.
Figures~\ref{fig:15stockprices} and \ref{fig:15stockreturns}  show  the stock prices and corresponding rate of returns for the 15 companies in the period considered.
In the figures, the trajectories with blue color are used for in-sample optimization and that with red color are used later for the out-of-sample test.

Consistent with Section~\ref{subsection: polyhedron ambigious return distributions}, we assume that the set of ambiguous return distributions is the box
	\[
	\mathcal{P} :=\left\{ p \in \mathbb{R}_+^m: |p_j -\overline{p}_j| \leq \gamma \cdot \overline{p}_j, \;\sum_{j = 1}^m p_j = 1, \;\; p_j \geq 0 \; {\rm for }\; j=1,2,\dots,m\right\}
	\]
	with $\gamma \in (0,1)$. 
	Recalling the analysis in Example~\ref{example: box ambiguity set}, it follows that
	$$
	A_1 = \begin{bmatrix}
		I_{m\times m} \\ -I_{m\times m}
	\end{bmatrix}; \;\; d_1 :=\begin{bmatrix}
		(\gamma+ 1)\overline{p} \\ (\gamma  - 1)\overline{p}   
	\end{bmatrix} \in \mathbb{R}^{(2m) \times 1}.
	$$
	The corresponding (approximate) log-optimal portfolio problem to be solved is 
	\begin{align*}
		&\max_{K,v, \lambda,W} W  - \lambda^T d_1\\
		&{\rm s. t.} \;\; \lambda \succeq 0\\
		&\hspace{4ex} K_{i,L} \ge 0; \;\; K_{i,S} \le 0, \;\;  i = 1,\dots,n;\\
		&\hspace{4ex} K_{i,\min} \leq K_{i,L}+ K_{i,S} \leq K_{i,\max} \;\; i=1,2,\dots, n;\\
		&\hspace{4ex} \sum_{i = 1}^n {\left| {{K_{i,L}} + {K_{i,S}}} \right|}  \leq L;\\
		&\hspace{4ex} \sum_{i = 1}^n { K_{i,L} |\min \left\{  x_{i,\min }, 0 \right\}} | 
		- \sum\limits_{i = 1}^n {{K_{i,S}}\max \left\{ {0, x_{i,\max} } \right\}}  \le 1\\
		&\hspace{4ex}{Z_j} \le {a_l}\left(  \sum_{i = 1}^n {\left( {{K_{i,L}} + {K_{i,S}}} \right)} {x}_{i}^j \right) + {b_l},\;\;\; j=1,2,\dots,m; \hspace{3mm} l=1,2,\dots,M\\
		&\hspace{4ex} W \leq Z_j + (A_1^T \lambda )_j , \; \;\; j=1,2,\dots,m.
	\end{align*}
	which is a linear program.
In the sequel, we take ambiguity constant $\gamma:=0.1$. 
On a $3.2$ GHz laptop with~$16$~GB RAM, the distribution log-portfolio optimization problem solved by the hyperplane approximation approach is less than $1$ second.
	
	\begin{table}[h!]
		\centering
		\caption{The Top 15 companies on the S\&P 500 index.}
		\begin{tabular}{|c|c|c|}
			\hline
			\# & Company & Symbol \\
			\hline
			1 & Apple Inc. & AAPL \\
			\hline
			2 &  Microsoft Corporation& MSFT  \\
			\hline
			3&  Amazon.com Inc. & AMZN  \\
			\hline
			4&  Tesla Inc.& TSLA  \\
			\hline
			5&  Alphabet Inc. Class A&GOOGL  \\
			\hline
			6& Alphabet Inc. Class C& GOOG \\
			\hline
			7&  Meta Platforms Inc. Class A& FB \\
			\hline
			8&  NVIDIA Corporation& NVDA \\
			\hline
			9&  Berkshire Hathaway Inc. Class B& BRK.B  \\
			\hline
			10&  JPMorgan Chase \& Co.&  JPM\\
			\hline
			11& Johnson \& Johnson & JNJ \\
			\hline
			12&  UnitedHealth Group Incorporated &  UNH\\
			\hline
			13&  	Procter \& Gamble Company& PG  \\
			\hline
			14&  Home Depot Inc.&  HD \\
			\hline
			15&  Visa Inc. Class A & V \\
			\hline
		\end{tabular}
		
		\label{table: top 15 companies}
	\end{table}
	
	\begin{figure}[h!]
		\centering
		\includegraphics[width=0.8\linewidth]{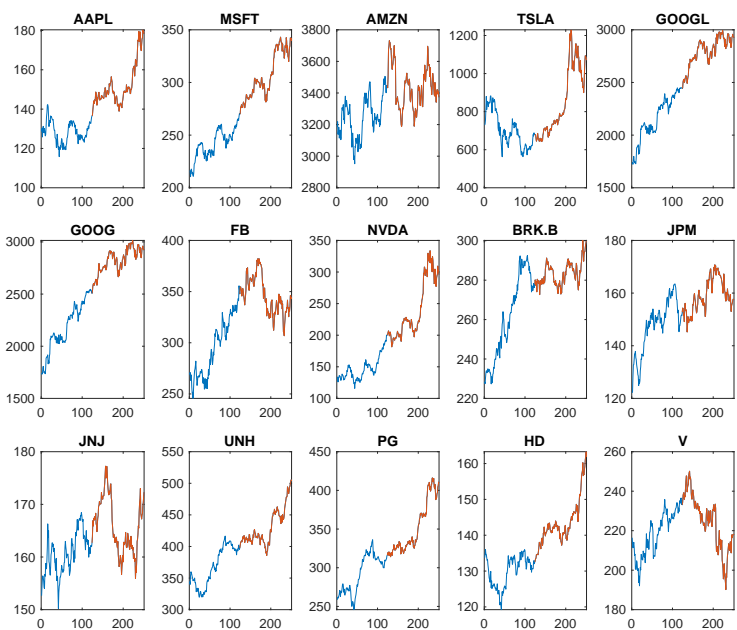}
		\caption{Stock Prices for the Top 15 Companies in the S\&P 500 Index.}
		\label{fig:15stockprices}
	\end{figure}
	
	\begin{figure}[h!]
		\centering
		\includegraphics[width=0.8\linewidth]{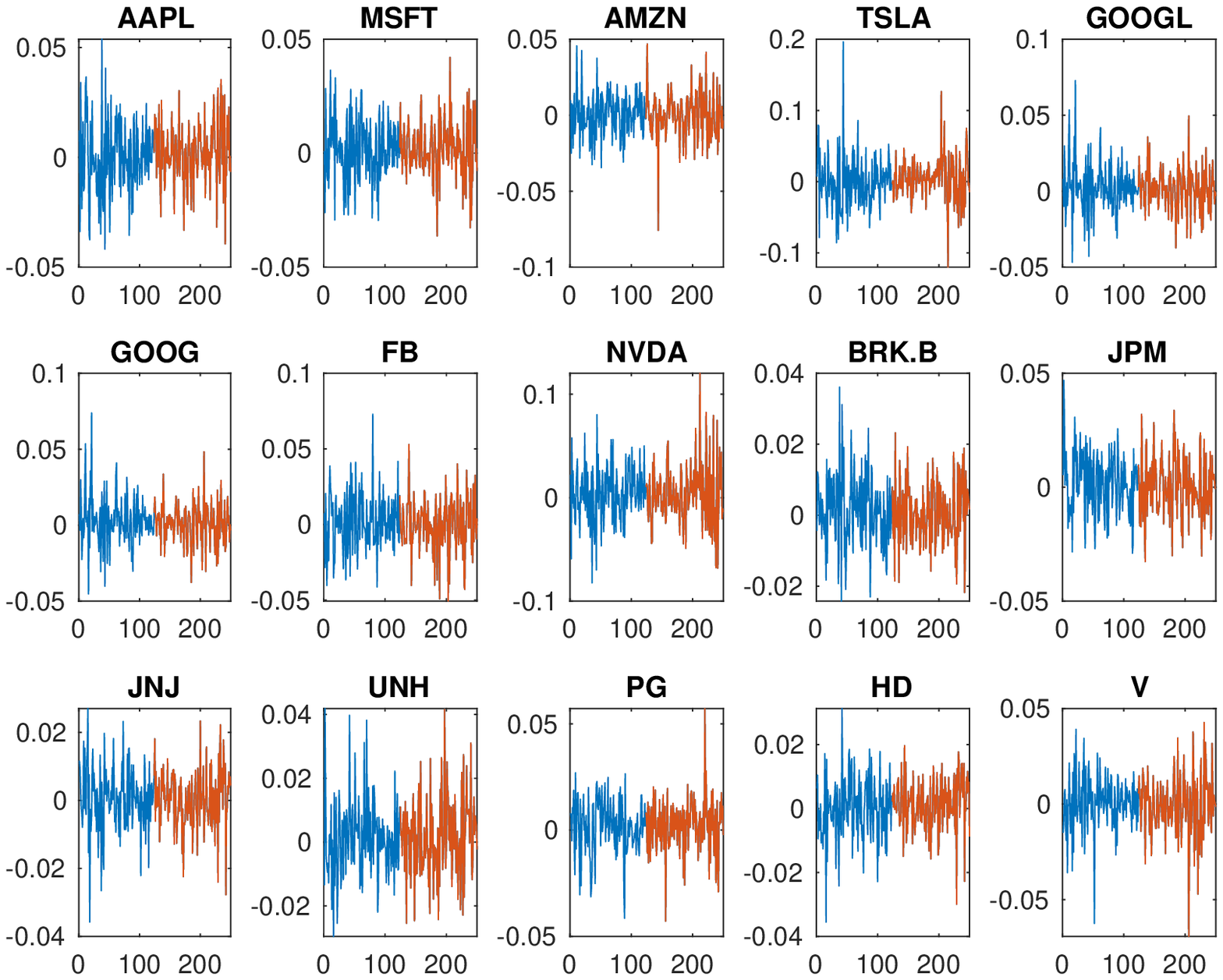}
		\caption{Rate of Returns for the Top 15 Companies in the S\&P 500 Index}
		\label{fig:15stockreturns}
	\end{figure}

\medskip
\textit{In-Sample Optimization.}
We estimate the joint probability $p_j = 1/m$ for all $j=1,2,\dots,m=123$ and take leverage constant $L=2$, and the constants for holding constraints  $K_{i,\max} = L/n = 2/15$ and $K_{i,\min}=0$ for all $i=1,2,\dots,15$. 
Hence, the trades are long-only but leverageable up to twice of investor's wealth.\footnote{It is well-known that the portfolio optimization in practice suffers greatly from the estimation error. In~\cite{jagannathan2003risk}, they proved that constraining portfolio weights to be nonnegative can reduce the risk in estimated optimal portfolios; see also \cite{demiguel2009generalized} for a generalization along the line of research by constraining portfolio norms.
}
In addition, to invoke our supporting hyperplane approach, take~$\varepsilon:=0.01$ for the allowable approximate error. Then by Algorithm~\ref{alg::OptNum_Hypers}, it yields $M=10$ hyperplanes to be used.
With these supporting hyperplanes, we solve the corresponding linear programming\footnote{There are many efficient solution packages or software routines that solves a linear program; e.g., MATLAB {linprog} function or using a modeling framework CVX; see \cite{grant2014cvx, diamond2016cvxpy} for further details.  }  Problem~\ref{problem: log-optimal portfolio optimization problem via hyperplane approximation}
and obtain the associated robust portfolio weighting vector~$K_h:=[K_{h,1}, \cdots, K_{h,n}]^T$  given by 
\begin{align*}
&K_{h,2}=K_{h,5}  = K_{6,h} =\cdots= K_{10,h} =K_{12,h}= K_{13,h} \approx 0.133;\\
& K_{h,1} =	K_{h,4} =	K_{11,h} =		K_{14,h} = 0;\\
&K_{h,3} = 0.0658;\\
&K_{h,15} = 0.1057.
\end{align*}
The associated logarithmic growth rate is $g_h(K_h) \approx 0.0029$.
On the other hand, we also solve the true log-optimal portfolio weight $K^*:=[K_1^* \;\; K_2^*\;\; \cdots \;\; K_n^*]^T$ and obtain a similar pattern as seen in~$K_h$ obtained previously. 
Specifically,
  \begin{align*}
		&K _1^*=K_2^*=K_3^*=K_5^*=\cdots =K_{10}^* = K_{12}^* = K_{13}^*=K_{15}^*  \approx 0.133 ;\\
		&K_{4}^* \approx 0.0016;\;\; K_{11}^* = 0.0819;\;\; K_{14}^* \approx 0.0041
 \end{align*}
and the associated optimal log-growth is $g^* \approx 0.0029$, which, as expected, is  very close to the~$g_h(K_h)$.  Figure~\ref{fig:portfolioweightscomparison} visualizes the portfolio weights $K_h$ and $K^*$ obtained above in a bar plot.

\begin{figure}[h!]
	\centering
	\includegraphics[width=0.8\linewidth]{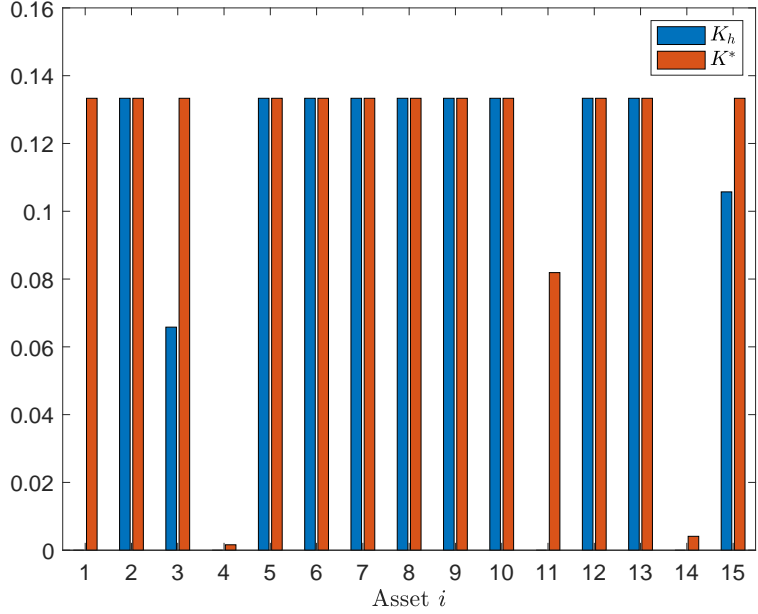}
	\caption{The Approximate Optimal Weight $K_h$ and the True Log-Optimal Weight $K^*$.}
	\label{fig:portfolioweightscomparison}
\end{figure}

\medskip
\textit{Metrics Relative to a Benchmark.}
With the transaction costs $c_i >0$ for $i=1,\dots,n$, recalling that the \textit{portfolio realized return} in period $k$ is defined as
\[
R^p(k) := \frac{V(k+1) - V(k)}{V(k)} .
\]
The (realized) cumulative return up to stage $k=N$ is given by $(V(N) - V(0) )/ V(0)$ and the log-growth rate is the logarithm of the realized cumulative return; i.e., $\log (V(N)/V(0))$.
The \textit{excess return} denoted 
$$R^p(k) := R^p(k) - r_f$$ where~$r_f$ is the \textit{risk-free rate}.
The \textit{realized (per-period) Sharpe ratio}, denoted by $SR$, of the portfolio is the average of the excess returns $\overline{R^p}$ over the standard deviation of the excess returns~$\sigma$; i.e., 
$$
SR:= \frac{\overline{R^p} - r_f}{ \sigma}
$$
and the \textit{N-period realized Sharpe ratio} can be approximated by $\sqrt{N}\cdot SR$; see \cite{lo2002statistics}.
Lastly, other than standard deviation, to scrutinize the downside risks over multi-period trading performance, we include the \textit{maximum percentage drawdown} as our risk metrics.
\[
d^*:= \max_{0\leq \ell < k \leq N} \frac{V(\ell) - V(k)}{V(\ell)}.
\]
Henceforth, we assume that the transaction costs is $c_i := 0.01\%$ of the trade value for each Asset~$i$ and (per-period) risk-free rate is given by $r_f(k):=0.01/N$ for all periods $k=0,1, \dots, N-1$. 

\medskip
\textit{In-Sample Trading Performance.}
As described previously, the in-sample trading involves the first six months with total~$N=124$ trading days.
Figure~\ref{fig:insample} depicts the in-sample trading performance in terms of the account value trajectories using~$K_h$ and~$K^*$. 
Consistent with our theory, a similar pattern of the two account value trajectories is seen in the figure.
In this specific example, we see that the approximate optimal weight $K_h$ even leads to superior performance to the true optimal weight $K^*.$ 
Additionally, we summarize some performance benchmarks mentioned previously in Table~\ref{table: in-sample trading performance summary}.

\begin{figure}[h!]
	\centering
	\includegraphics[width=0.7\linewidth]{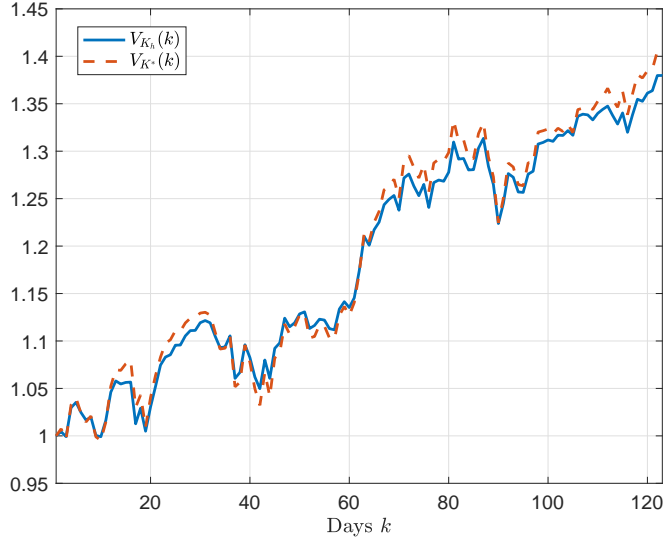}
	\caption{In-Sample Trading Performance}
	\label{fig:insample}
\end{figure}

\begin{table}[h!]
	\centering
	\caption{In-Sample Trading Performance Summary}
	\begin{tabular}{|c|c|c|c|}
		\hline
		& $K_h$ & $K^*$     \\
		\hline
		\makecell[c]{Average Excess Return: $\overline{R}^p - r_f$ } & 0.00262 &0.00282 \\
		\hline
		\makecell[c]{Standard Deviation of Excess Return: $\sigma = {\rm std}({R}^p(k) - r_f)$ } & 0.0139 &0.0168 \\
		\hline
		\makecell[c]{Sharpe Ratio: $ \sqrt{N} \cdot (\overline{R}^p-r_f )/ \sigma$ } & 2.0841 &1.8735 \\
		\hline
		\makecell[c]{Cumulative Return: 
			$\frac{ V(N) - V(0) }{ V(0)}$} & 37.82\% &40.71\% \\
		\hline
		\makecell[c]{ Log-Growth Rate of Wealth: $\log \frac{V(N)}{V(0)}$}& 0.3208  &0.3415  \\
		\hline
		\makecell[c]{Maximum Percentage Drawdown:\\
			$
			\max_{0\leq \ell < k \leq N} \frac{ V(\ell) - V(k)}{ V(\ell)}$ }
		& 6.86\% & 8.67\% \\	
		\hline
	\end{tabular}
\label{table: in-sample trading performance summary}
\end{table}

\medskip
\textit{Out-of-Sample Trading Performance.}
We now carry out an out-of-sample test by considering an additional sixty months within the period from July 01, 2021, to December 31, 2021. 
The total trading days are $N=126$ days. 
Again, we assume that the transaction costs $c_i := 0.01\%$ of the trade value for each Asset $i$.
Beginning with~$V(0)= \$1$, we compare the account value using the approximate weight $K_h$ and log-optimal weight~$K^*.$ Figure~\ref{fig:outofsample} shows the account value trajectories for $V_{K_h}(k)$ and $V_{K^*}(k)$ within the out-of-sample horizon. 
Consistent with our theory, while there is an ambiguity in return distributions, the similar patterns of the account value trajectories using~$K^*$ and $K_h$, respectively, are similar enough.
Specifically, after $126$ trading days, we see that the terminal account values are $V_{K_h}(N) \approx 1.1817$ and $V_{K^*}(N) \approx 1.2467$.
This shows that our hyperplane approximation approach is indeed a competitive alternative for solving a distributional robust log-optimal portfolio problem.
 Table~\ref{table: trading performance summary}  reports some other performance benchmarks.

\begin{figure}[h!]
	\centering
	\includegraphics[width=0.7\linewidth]{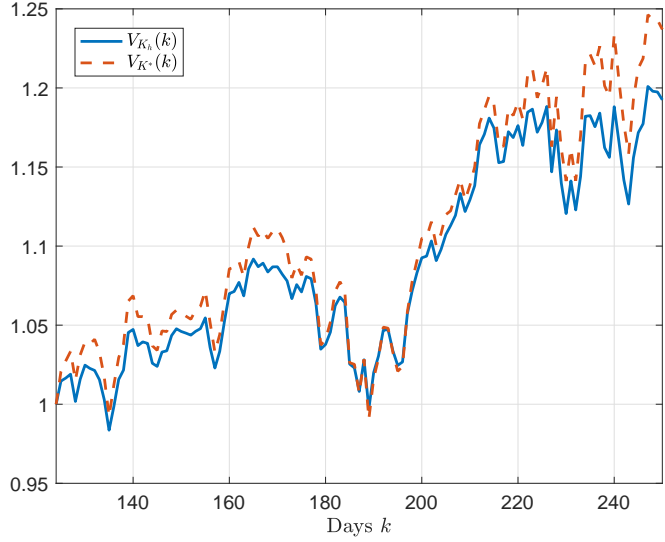}
	\caption{Out-of-Sample Trading Performance}
	\label{fig:outofsample}
\end{figure}

\medskip
\begin{remark}\rm
	In Example~\ref{example: numerical experiments}, we see that only $10$ hyperplanes are needed for assuring approximate error to be less than $\varepsilon=0.01$ when there is no ambiguity on return distributions. 
This idea enables us to form the linear program; i.e., Problem~\ref{problem: log-optimal portfolio optimization problem via hyperplane approximation}, and solve it efficiently.
Said another way, the hyperplane approximation approach shows great potential as an alternative way for solving the distributional log-optimal portfolio problem.
\end{remark}


%

%

\begin{table}[h!]
	\centering
\caption{Out-of-Sample Trading Performance Summary}
\begin{tabular}{|c|c|c|c|}
	\hline
	 & $K_h$ & $K^*$     \\
	 \hline
	 	 \makecell[c]{Average Excess Return: $\overline{R}^p - r_f$ } & 0.00141 &0.00173 \\
		 \hline
	\makecell[c]{Standard Deviation of Excess Return: $\sigma = {\rm std}({R}^p(k) - r_f)$ } & 0.0134 &0.0158 \\
			 \hline
	\makecell[c]{Sharpe Ratio: $ \sqrt{N} \cdot (\overline{R}^p-r_f )/ \sigma$ } & 1.1817 &1.2467 \\
	\hline
	 \makecell[c]{Cumulative Return: 
	 	$\frac{ V(N) - V(0) }{ V(0)}$} & 19.24\% &23.70\% \\
	\hline
	\makecell[c]{ Log-Growth Rate of Wealth: $\log \frac{V(N)}{V(0)}$}& 0.1760  &0.2127  \\
	\hline
	 \makecell[c]{Maximum Percentage Drawdown:\\
	 	$
	 \max_{0\leq \ell < k \leq N} \frac{ V(\ell) - V(k)}{ V(\ell)}$ }
	 & 8.68\% & 10.91\% \\	
	\hline
\end{tabular}

\label{table: trading performance summary}
\end{table}
		\end{example}

		
		\section{Conclusions and Future Work} \label{section: conclusions and future work}
		In this paper,  we provide a  supporting hyperplane approximation approach for solving a class of distributional robust log-optimal portfolio selection problems under a polyhedron set for ambiguous return distributions. 
	Our framework are flexible enough to allow various practical trading requirements such as \textit{transaction costs}, \textit{leverage and shorting}, \textit{survival trades}, and \textit{diversification considerations}.
	With the aid of these supporting hyperplanes, we reformulated the distributional robust optimization problem as a linear program and solve it in a very efficient way. 
	We also proved that the approximate solution obtained by linear programming can be arbitrary close to the true log-optimal portfolio.
	Extra flexibility for reducing the computational complexity by selecting the optimal number of supporting hyperplanes for approximation is also studied.
		To pursue further along the line of this research, two interesting directions are listed as follows.
		
		\subsection{Moment Ambiguity Set}
One interesting direction would be to replace the box ambiguity with some moment ambiguity. 
		 The reason is that the investors are typically able to obtain the estimated mean and covariance matrix subject to some bounds on the estimation errors; see \cite{delage2010distributionally}.
		 To this end, one might consider an ambiguity set with estimated mean $\mu$ and a positive definite covariance matrix~$\Sigma$ of return~$r\in \mathbb{R}^n$. That is,
		 \[
		 \mathcal{P} := \left\{p \in S_m: \mathbb{E}_p[r - \mu]^T\Sigma^{-1} \mathbb{E}_p[r-\mu] \leq \rho_1, \; \mathbb{E}_p[(r-\mu)(r-\mu)^T] \leq \rho_2\right\}
		 \] 
		 for some constants $\rho_1, \rho_2$ where $S_m$ is the probability simplex as defined previously.
		 As mentioned in \cite{sun2018distributional, rujeerapaiboon2016robust}, the distributional robust log-optimal portfolio problem with the moment-based ambiguity constraint above is equivalent to a semidefinite programming problem (SDP). 
		 However,  via a similar hyperplane or a certain quadratic approximation approach, we envision that  such a problem may be approximated by a  more computationally tractable  second-order cone program or even a simpler quadratic program.

		\subsection{Incorporating with Drawdown Risk} \label{section: Control of Drawdown Risk}
				 Another interesting direction would be incorporating some \textit{drawdown} risk constraints into our optimization formulation. 
		In practice, control of drawdown is arguably the most important risk management task for a trader or fund manager.
		According to \cite{chekhlov2004portfolio}, it is unlikely that a particular one would tolerate more than~50\% drawdown in the account. 
	To address this, we provide a way to incorporate our framework to involve a consideration regarding drawdown~risk.

		Given any sample path $\{V(k): k \geq 0\}$, the \textit{maximum percentage drawdown} is defined as 
		$$
		d_K^* := \max_{0\leq \ell < k \leq N} \frac{ V_K(\ell) - V_K(k)}{ V_K(\ell)}.
		$$
		In the sequel, we shall sometimes drop the word ``percentage" in reference to this quantity.
		Take 
		$$
		D_K:=1-d_K^*,
		$$
		which represents the \textit{complementary drawdown}.  
		Then,  given any $\delta \in (0,1)$,  the drawdown constraint $d_K^* \leq \delta$ with probability one is equivalent to~$D_K \geq 1-\delta$. Taking the logarithm on both sides, we 
		form a \textit{surrogate} expected maximum drawdown constraint
		\[
		\mathbb{E}[\log D_K] \geq \log(1-\delta).
		\]
		The surrogate is useful since it forms a convex constraint set and hence facilitates the optimization. This result is stated in the following lemma.
		
		\begin{lemma}[Convex Drawdown Surrogate] \label{lemma: convex drawdown surrogate}
				Fix $\delta \in (0,1)$, we have the following results. 
				
				$(i)$ The following identity 
				\[
				\mathbb{E}[\log D_K] = \mathbb{E}\left[ \min_{0 \leq \ell < k \leq  N} \sum_{i = \ell}^{k-1} \log (1+K^TX(i)) \right]
				\] holds.
				
				$(ii)$ The associated constraint set
	$$
				\mathcal{D}_{K, \delta}:=\{K \in \mathbb{R}^n: \mathbb{E}[\log D_K] \geq \log(1-\delta)\}
			$$
				is convex.
			\end{lemma}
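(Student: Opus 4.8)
The plan is to establish $(i)$ by a direct algebraic simplification of the complementary drawdown $D_K$ using the multiplicative structure of the account value, and then to deduce $(ii)$ from $(i)$ via a short chain of concavity-preserving operations.

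First I would simplify $D_K$. Since $c_i=0$, the account value satisfies $V_K(k) = \prod_{i=0}^{k-1}(1+K^TX(i))\,V(0)$, so for any pair $0\le \ell < k \le N$,
\[
1 - \frac{V_K(\ell) - V_K(k)}{V_K(\ell)} = \frac{V_K(k)}{V_K(\ell)} = \prod_{i=\ell}^{k-1}\bigl(1+K^TX(i)\bigr),
\]
where the last equality is the telescoping of the product. Subtracting the per-pair drawdown from one in the definition of $d_K^*$ converts the outer $\max$ into a $\min$, giving
\[
D_K = 1 - d_K^* = \min_{0\le \ell < k \le N}\;\prod_{i=\ell}^{k-1}\bigl(1+K^TX(i)\bigr).
\]
Because $\log$ is strictly increasing it commutes with the $\min$, so $\log D_K = \min_{0\le \ell < k \le N}\sum_{i=\ell}^{k-1}\log(1+K^TX(i))$; taking expectations yields the identity in $(i)$.

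For $(ii)$ I would argue concavity of the map $K \mapsto \mathbb{E}[\log D_K]$. For each fixed return path and each fixed pair $(\ell,k)$, the inner sum $\sum_{i=\ell}^{k-1}\log(1+K^TX(i))$ is a finite sum of functions $K\mapsto \log(1+K^TX(i))$, each the composition of the concave $\log$ with an affine map of $K$ and hence concave; a sum of concave functions is concave. The pointwise minimum over the finitely many pairs $(\ell,k)$ preserves concavity, and taking expectation preserves it once more. Thus $K\mapsto \mathbb{E}[\log D_K]$ is concave, and $\mathcal{D}_{K,\delta}$ is the superlevel set $\{K : \mathbb{E}[\log D_K]\ge \log(1-\delta)\}$ of a concave function, which is convex.

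The main obstacle is modest and lies entirely in $(i)$: one must recognize that subtracting the per-pair drawdown from one collapses into the ratio $V_K(k)/V_K(\ell)$, that this ratio telescopes exactly over the window $[\ell,k-1]$, and that the complementation turns the defining $\max$ into the $\min$ representing $D_K$. Once the representation $D_K = \min_{\ell<k} \prod_{i=\ell}^{k-1}(1+K^TX(i))$ is in hand, both the $\log$/$\min$ interchange in $(i)$ and the concavity cascade in $(ii)$ are routine.
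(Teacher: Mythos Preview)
Your argument is correct and matches the paper's proof essentially step for step: part~$(i)$ is obtained by rewriting $1-d_K^*$ as $\min_{\ell<k} V_K(k)/V_K(\ell)$, telescoping the product, and pushing the monotone $\log$ through the $\min$; part~$(ii)$ follows from the same concavity chain (affine composition, sum, pointwise minimum, expectation) and the superlevel-set characterization.
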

		
\begin{proof}
			The results were originally stated in~\cite{hsieh2015kelly}. However, for the sake of completeness, a full proof is provided here.
			To prove part~$(i)$, we fix $\delta \in (0,1)$ and observe that
			\begin{align*}
					\mathbb{E}[\log D_K]
					&= \mathbb{E}[\log (1- d^*) ]\\
					&=\mathbb{E}\left[ \log \left(1- \max_{0\leq \ell < k \leq N} \frac{V(\ell) - V(k)}{V(\ell)} \right) \right]\\
					&=\mathbb{E}\left[ \log \left(\min_{0\leq \ell < k \leq N}   \frac{V(k)}{V(\ell)} \right) \right]\\
					&=\mathbb{E}\left[  \min_{0\leq \ell < k \leq N}   \sum_{i=\ell}^{k-1} \log(1+K^TX(i)) \right]
				\end{align*} 
			which is desired.
			
			To prove part~$(ii)$, we note that $1+K^TX(i)$ is affine in~$K$ for any realization of $X(i)$ and $\log(\cdot)$ is concave. Hence, the composition $\log(1+K^TX(i))$ is concave in $K$. By the fact that the sum of concave functions is still concave and the pointwise minimum of concave functions is concave, it follows that the term   $\min_{0 \leq \ell < k \leq  N} \sum_{i=\ell}^{k-1} \log(1+K^TX(i))$ is concave in $K$. Finally, the expected value operator preserves concavity, hence, the surrogate expected drawdown $\mathbb{E}[\log D_K]$ is concave in $K$.
			This implies that the set
			$
			\mathcal{D}_{K, \delta} =\{K \in \mathbb{R}^n: \mathbb{E}[\log D_K] \geq \log(1-\delta)\}
			$ 
			is convex. 
\end{proof}

		\begin{theorem}[A Drawdown-Based Log-Optimal Portfolio Problem]
				Let $\delta \in (0,1)$ be given. Then the distributional robust log-optimal portfolio problem involving the surrogate drawdown constraint; i.e.,
				\begin{align*}
						\max_{K \in \mathcal{K} \cap \mathcal{D}_{K, \delta}} \; \inf_{p \in \mathcal{P}} g_p(K)
					\end{align*}
				is a concave optimization problem 	where $g_p(K) = \mathbb{E}_p[\log (1+K^T X(0))]$.
			\end{theorem}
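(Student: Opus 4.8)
The plan is to establish the two hallmarks of a concave program separately — concavity of the objective and convexity of the feasible region — and then combine them.

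First I would argue concavity of the objective $K \mapsto \inf_{p \in \mathcal{P}} g_p(K)$. For each fixed $p$, the map $g_p(K) = \sum_{j=1}^m p_j \log(1+K^T x^j)$ is a nonnegatively weighted sum of the concave functions $K \mapsto \log(1+K^T x^j)$, and is therefore concave in $K$. Taking the pointwise infimum over $p \in \mathcal{P}$ preserves concavity, so the worst-case growth rate $\inf_{p \in \mathcal{P}} g_p(K)$ is concave in $K$; this is exactly the observation already recorded just before Theorem~\ref{theorem: distributional robust log-optimal portfolio problem}, and it does not depend on the particular structure of $\mathcal{P}$ beyond nonnegativity of the $p_j$.

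Next I would verify that the feasible set $\mathcal{K} \cap \mathcal{D}_{K, \delta}$ is convex. The admissible set $\mathcal{K}$ is convex and closed, since it is carved out by the linear shorting/leverage, survival, and diversification inequalities of Section~\ref{subsection: constraints considerations}. The drawdown surrogate set $\mathcal{D}_{K, \delta}$ is convex by part~(ii) of Lemma~\ref{lemma: convex drawdown surrogate}. Because an intersection of convex sets is again convex, $\mathcal{K} \cap \mathcal{D}_{K, \delta}$ is convex.

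Finally, maximizing a concave objective over a convex constraint set is by definition a concave optimization problem, which completes the argument. The substantive work is entirely front-loaded into Lemma~\ref{lemma: convex drawdown surrogate}, whose part~(ii) converts the seemingly path-dependent maximum-drawdown requirement into a convex constraint by rewriting $\mathbb{E}[\log D_K]$ as an expectation of a pointwise minimum of sums of the concave maps $\log(1+K^T X(i))$. Once that lemma is in hand there is no remaining obstacle; the only mild caution is to assume $\mathcal{K} \cap \mathcal{D}_{K, \delta}$ is nonempty so that the program is well posed.
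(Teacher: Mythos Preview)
Your proposal is correct and follows essentially the same route as the paper's own proof: concavity of $\inf_{p\in\mathcal{P}} g_p(K)$ via the pointwise-infimum-of-concave argument, convexity of $\mathcal{K}\cap\mathcal{D}_{K,\delta}$ by invoking Lemma~\ref{lemma: convex drawdown surrogate}(ii) together with the convexity of $\mathcal{K}$, and then the standard conclusion. The only differences are cosmetic — you spell out a bit more detail on why each $g_p$ is concave and add the nonemptiness caveat — but the argument is the same.
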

		
\begin{proof}
Since $\mathbb{E}_p[\log (1+K^T X(0))]$ is concave in $K$, the infimum of a family of concave functions is concave.
Hence,	the objective function $\inf_{p \in \mathcal{P}} g_p(K)$ is  concave in $K$.
 By Lemma~\ref{lemma: convex drawdown surrogate}, the set $\mathcal{D}_{K, \delta}$ is convex. 
			Since~$\cal K$ is convex, $\mathcal{K} \cap \mathcal{D}_{K, \delta}$ is again convex. Therefore, the maximization problem stated in the theorem above has a concave objective with a convex constraint, which leads to a concave optimization problem. 
\end{proof}
		
		\smallskip
		\begin{remark}\rm
			In contrast to the work in~\cite{chekhlov2005drawdown} that uses arithmetic (uncompounded) returns for calculating the drawdown risks. Here, we propose a convex drawdown surrogate that facilitates the optimization. 
			There are also some alternative approaches for controlling the drawdown within the expected log-optimal portfolio framework; e.g., see \cite{maclean1992growth} and \cite{hsieh2017inefficiency}.
			\end{remark}

\bigskip
\bibliographystyle{ieeetr}
\bibliography{refs.bib}

\end{document}